\newcommand\dela[1]{}
\def\XXint#1#2#3{{\setbox0=\hbox{$#1{#2#3}{\int}$ }
\vcenter{\hbox{$#2#3$ }}\kern-.6\wd0}}
\newtheorem{theorem}{Theorem}
\numberwithin{equation}{section}
\numberwithin{theorem}{section}
\newtheorem{lemma}[theorem]{Lemma}
\numberwithin{equation}{section}
\newcommand{\wt}{\widetilde}
\newcommand{\e}{\varepsilon}
\def\P{\mathbb{ P}}
\def\E{\mathbb{E}}
\def\P1{\mathbb{P}^{1}}
\def\o{\overline}
\begin{document}

%\showkeys

\title{Homogenization of Brinkman flows in heterogeneous dynamic media}
%Averaging of the Brinkman equation coupled with a particle  deposition system}

\author[H. Bessaih]{Hakima Bessaih}
\address{University of Wyoming, Department of Mathematics, Dept. 3036, 1000
East University Avenue, Laramie WY 82071, United States}
\email{ bessaih@uwyo.edu}

\author[Y. Efendiev]{ Yalchin Efendiev}
\address{Department of Mathematics \& ISC,
Texas A\&M University,
Numerical Porous Media SRI Center, CEMSE Division,
King Abdullah University of Science and Technology, 
Thuwal 23955-6900,
Kingdom of Saudi Arabia}
\email{efendiev@math.tamu.edu}

\author[F. Maris]{ Florian Maris}
\address{Numerical Porous Media SRI Center, CEMSE Division,
King Abdullah University of Science and Technology, 
Thuwal 23955-6900,
Kingdom of Saudi Arabia}
\email{florinmaris@gmail.com}

\maketitle

%\begin{center}
%{\large\bf Hom Stokes DBC\\
%\bigskip\bigskip
%.......................}
%\end{center}

{\footnotesize
\begin{center}

%Department...

\end{center}
}
\begin{abstract}
In this paper, we study Brinkman's equations with microscale properties
that are highly heterogeneous in space and time. The time variations are
controlled by a stochastic particle dynamics described by an SDE. 
The particle dynamics can be thought as particle deposition
that often occurs in filter problems. Our main results include
the derivation of macroscale equations and showing that the macroscale
equations are deterministic. The latter is important for our (also many
other) applications as it greatly simplifies the macroscale equations.
We use the asymptotic properties of the SDE and the periodicity of the Brinkman's coefficient in the space variable to prove the convergence result. The SDE has a unique invariant measure that is ergodic and strongly mixing. The macro scale equations are derived through an averaging principle of the slow motion (fluid velocity) with respect to the fast motion (particle dynamics) and also by averaging the Brinkman's coefficient with respect to the space variable. Our results can be extended to more general nonlinear diffusion equations with heterogeneous coefficients. 
%However, a careful study of a case by case will be necessary in order to get the effective equations. 
\end{abstract}
{\bf Keywords:}  Brinkman flows, Homogenization, Averaging, Invariant measures, mixing.\\
\\
\\
{\bf Mathematics Subject Classification 2000}: Primary 60H30, 76S05, 76M50; Secondary 76D07, 76M35.

\maketitle

\section{Introduction and formulation of the problem}
\label{sec1}

\subsection{Motivation.}
\label{subs11}

In many porous media application problems, the media is subject to change due
to pore-scale processes. For example, in filter applications \cite{galina, il04, gk10, ikls11}, 
the media
properties and microscale geometry change due to particles 
are captured by the filter (see Figure \ref{schematic} for illustration).
In this figure, we depict a filter element and particle deposition process
(following \cite{galina}).
The particle deposition changes the microscale
geometry of the filter and thus can greatly affect its macroscopic properties
that are used in simulations \cite{il04, galina}. 
The change due to particle deposition is described
by Stochastic Differential Equations (SDEs) \cite{galina} 
where the particles' mean
velocities are affected by the fluid velocity. Thus, the modified effective
properties strongly depend on particle dynamics and deriving and understanding
these effective properties are essential for many of these applications.
Motivated by this application, we consider a Brinkman model (cf. \cite{galina, il04})  where the permeability changes due to particle dynamics that are driven by the fluid flow.

\begin{figure}[tb]
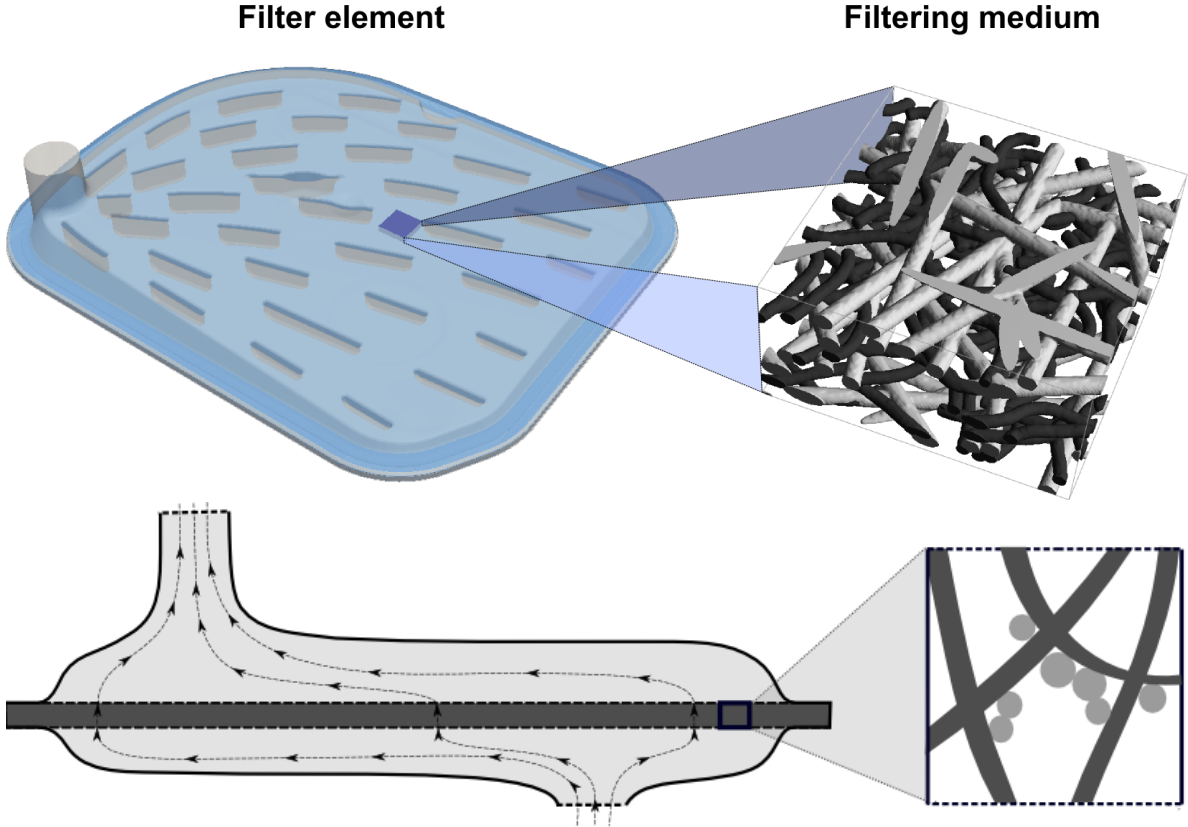

  \centering
 \includegraphics[width=1.0 \textwidth]{micromacro}
  \includegraphics[width=1.0 \textwidth]{mima_new}
  \caption{Illustration of a filter element (top) and a cross section of a filter with a particle deposition (bottom). Courtesy of G. Printsypar.}
  \label{schematic}
\end{figure}

In the paper, we derive macroscopic model assuming that the particle dynamics at microscale
occurs in a much faster time scale compared to the flow. This is typical 
in these applications due to the particle dynamics and their interaction. 
We derive
a macroscopic model where the new upscaled permeability is computed
using spatial microscale variations of the permeability and fast dynamics. 
Besides computing the permeability value, we show that the permeability is deterministic.
This is a useful findings as it allows to compute the upscaled permeability
in a deterministic manner and avoid stochastic macroscale PDEs.

Even though our application is specific for Brinkman' equations, 
our mathematical concepts can be used for many important
applications where the media properties at the microscale are affected
by SDE. An example can be a diffusion equation with heterogeneous coefficients
that depend on a field described by SDE. In general, we have
\begin{equation}
\begin{split}
\dfrac{\partial u^\e}{\partial t} (t,x)= L(a({x\over\e},v^\e),u^\e)\\
d v^\e(t,x) = -\dfrac{1}{\e} (v^\e(t,x)-u^\e(t,x)) dt+ \sqrt {\dfrac{Q}{\e}} dW(t,x),
\end{split}
\end{equation}
where  $W(t)$ is a  standard Brownian motion. For example, 
$L(a({x\over\e},v^\e),u^\e)=\nabla\cdot(a({x\over\e},v^\e)\nabla u^\e)$.
The question of interest is to derive macroscale equations which will be 
investigated in our future works.

\subsection{Mathematical model}
We consider the following system:
\label{subs12}
\begin{equation}
\label{system}
\left\{
\begin{array}{rll}
\dfrac{\partial u^\e}{\partial t} (t,x)&= \Delta u^\e(t,x) - \nabla p^\e(t,x) - \alpha \left(\dfrac{x}{\e}, v^\e(t,x)\right) u^\e(t,x) + f(t,x) &\mbox{ in }\ [0,T] \times D , \\
\operatorname{div}u^\e(t,x) &=0 &\mbox{ in }\ [0,T] \times D, \\
d v^\e(t,x) &= -\dfrac{1}{\e} (v^\e(t,x)-u^\e(t,x))dt + \sqrt {\dfrac{Q}{\e}} dW(t,x) &\mbox{ in }\ [0,T] \times D, \\
u^\e(t,x) &=0 &\mbox{ on }\  [0,T]\times\partial D, \\
u^\e(0,x) & = u_0^\e(x)&\mbox{ in }\  D,\\
v^\e(0,x) & = v_0^\e(x)&\mbox{ in }\  D,
\end{array}
\right.
\end{equation}
where $D$ is a bounded domain of $\mathbb{R}^3$ with a smooth boundary $\partial{D}$, $u^\e$ and $p^\e$ are the velocity and the pressure of the fluid and $v^\e$ the velocity of a particle. $\alpha$ is the Brinkman coefficient and describes the porosity of the medium and is affected by the particle deposition as explained in the previous section. $W(t)$ is an  $L^2(D)^3$-valued  standard Brownian motion defined on a complete probability basis $(\Omega, \mathcal{F}, \mathcal{F}_t, {\mathbb P})$ with expectation $\E$,   and $Q$ is a bounded linear operator on $L^2(D)^3$ of trace class. $u^\e_0$ and 
$v^\e_0$ are the initial velocities for the fluid and for the particle, and $f$ is an external force.

The first equation in system \eqref{system} is a Brinkman type equation. 
For a fixed $\epsilon$ and when the equation for $u^\e$ is not coupled with the equation for $v^\e$, number of papers have been devoted to its mathematical analysis see e.g. \cite{varga} and the references therein. In \cite{varga} the authors study the existence, uniqueness and regularity of solutions for the 
Brinkman-Forchheimer equation in dimension 3 and their global attractors. Let us observe that in \cite{varga}, the term $\alpha(u)u$ is assumed to be monotone, while it's not in the current paper. In particular, the authors in \cite{varga} studied the polynomial case thoroughly. Their result was extended to the convective Brinkman-Forchheimer equation which includes the nonlinear term of the Navier-Stokes equations. Similar results are obtained in \cite{saber} where  a dissipative term is added to the equation.

Our main goal in this paper is to study the asymptotic behavior of the solutions of system  \eqref{system} when  $\e\longrightarrow 0$. Notice that $u^\e$ is random through the function $\alpha$ that depends on  the stochastic process $v^\e$ solution of a stochastic differential equation. Moreover, the function $\alpha$ is a multi scale function.  Here, $u^\e$ is the slow component and $v^\e$ is the fast one.  We will prove that $u^\e$ converges to an averaged velocity $\o{u}$ solution of the averaged equation  \eqref{eqou} where the averaged operator $\o\alpha$ is given by \eqref{defoalpha}. Here, the averages are taken with respect to the periodic variable $y$ and the invariant measure associated to the process $v^\e$ for a frozen $u^\e$.

%This means that by averaging principle,  a good approximation  of the slow motion $u^\e$ can be obtained by %averaging its variables in the fast variables. 

There is a quite large number of papers dealing with averaging principles for finite dimensional systems in both deterministic and stochastic systems.
%see for e.g. \cite{AN, FW, Ver, kiter} and the references therein to name just a few.
Less has been done in the infinite dimensional setting, we refer to \cite{ Cerrai2009, CF2008} and the references therein. There is not much in the literature dealing with averaging systems in porous media. 
In \cite{CF2008}, the authors prove an averaging principle for a very general class of stochastic PDEs. 
Our system looks similar to theirs with a very important difference. Our function $\alpha(\cdot, v) u$ is not Lipchitz and it contains the variable $x/\e$ that describes the heterogeneities of the medium. 
Hence, their results although very general can't be used here. 

For $\e>0$ fixed, the well posedness of system \eqref{system} does not follow from classical results and has to be studied accordingly. In this paper, we assume that $\alpha\in C^{1}_{b}$. Despite this condition, the term  $\alpha(\cdot, v) u$ in system \eqref{system} is neither Lipschitz nor autonomous. 

We prove the existence of weak solutions by using a Galerkin approximation $(u_{n}^\e, v_{n}^\e)$
that is solution of a well posed system and then pass to the limit on $n$ after performing some uniform estimates in $n$. These estimates are also uniform in $\e$. 
By using our assumption on $\alpha$ and the special form of our system, we are able to prove the uniqueness of the weak solution  $(u^\e, v^\e)$. We prove that our weak solution is also strong, and get better uniform estimates in $\e$ for the solution $u^\e$ in the Sobolev space $H^{2}(D)^3$. 

Then, we study the asymptotic behavior of the fast motion variable $v^\e$ for a frozen slow motion variable $u^\e$. Indeed, we consider the SDE \eqref{v} for a given $\xi$. It has a mild solution which is also a strong solution. Its transition semigroup $P_t^\xi$ is well defined and has a unique invariant measure 
$\mu^{\xi}$ which is ergodic and strongly mixing. 
We will consider the auxiliary equation 

\begin{equation*}
c(\e) \Psi^\e(\eta,\xi) - \mathcal{L}^\xi \Psi^\e(\eta,\xi) =  \int_D \left(\alpha^\e(\eta) - \o{\alpha^\e}(\xi)\right)\xi\phi dx.
\end{equation*}
for $c(\e) >0$ to be chosen later.  The solution is given by

\begin{equation*}
\Psi^\e (\eta,\xi) = \int_0^\infty e^{-c(\e) t} P_t^\xi \left[ \int_D \left(\alpha^\e(\cdot) - \o{\alpha^\e}(\xi)\right)\xi\phi dx (\eta)\right] dt.
\end{equation*}

Here $\mathcal{L}^\xi$ is the generator of the semigroup $P_t^\xi$, $\xi, \eta \in L^2(D)^3$ and
$\phi$ is an arbitrary function in $\bold{V}$. The operator $ \alpha^\e$ is defined in section 3 while the operator 
$\o{\alpha^\e}$ is defined in section 5 and refers to the averaged of $\alpha^\e$ wrt to the invariant measure 
$\mu^{\xi}$. The main difficulty stands in passing  to the limit on the term 
\begin{equation}\label{term}
\int_D \left( \alpha^\e(v^\e(t)) u^\e(t) - \o{\alpha}(\o{u}(t)) \o{u}(t) \right)\phi dx
\end{equation}
for $\phi \in \bold{V}$, where $\o\alpha$ is defined in \eqref{defoalpha} at the beginning of section 5. 
This is done by using the It\^o formula on $\Psi^\e (u^\e,v^\e)$ and isolating the term  \eqref{term}. We mainly follow the idea already introduced  in \cite{CF2008}. By using the uniform estimates obtained in Section 3, a tightness argument and some known results for periodic functions, see \cite{A-2s} (lemma 1.3) the passage to the limit is performed in distribution. We obtain a  convergence in probability by using the fact that the limit $\o{u}$ is deterministic.

The paper is organized as follows, Section 2 is dedicated to the introduction of the functional setting and assumptions. In section 3, system \eqref{system} is analyzed for every $\e>0$. In particular existence of strong solutions are established with their uniqueness and their uniform estimates with respect to $\e$.   The fast motion $v^\e$ is analyzed in section 4, these are known results and we give some references. The passage to the limit is performed in section 5. Furthermore, the well posedness of the averaged equation is established.

\section{Preliminaries and Assumptions}
\label{subs13}
We make the following notations for the spaces that will be used throughout the paper. 
For any Hilbert space $X$, $B_b(X)$ denotes the Banach space of bounded Borel functions 
$\phi : X\to \mathbb{R}$ endowed with the supremum norm:
$$\|\phi\|_{B_b(X)} = \sup_{x\in X} |\phi (x)|.$$
$C_b(X)$ denotes the subspace of bounded and continuous functions and $C_b^k(X)$ the subspace of functions that are $k$ times Fr\^{e}chet differentiable with continuous and bounded derivatives up to order $k$ endowed with the norm:
$$\|\phi\|_{C^k_b(X)} =\max_{0\leq l \leq k} \sup_{x\in X} \left\|\dfrac{\partial ^l\phi} {\partial x^l} (x)\right\|_{\mathcal{L}^l( X)},$$
where $\mathcal{L}^0(X)=\mathbb{R}$ and for every $i>0$, 
$\mathcal{L}^i(X)=\mathcal{L}(X;\mathcal{L}^{i-1}(X))$, the Banach space of bounded linear operators from $X$ to $\mathcal{L}^{i-1}(X)$.
For $Y=[0,1]^3$ the space $C_\#(Y)$ denotes the space of continuous functions on $Y$ that are $Y$-periodic and the space $L^2_\#(Y)$ denotes the closure of $C_\#(Y)$ in $L^2(Y)$.

We denote by $\bold{H}$ and $\bold{V}$ respectively the closures of $\mathcal{V}$ in $L^2(D)^3$ and $H^1(D)^3$ where 
\begin{equation}
\label{spaceV}
\mathcal{V}:=\left\{u\in C^\infty\left(\overline{D}\right)^3 \ \ |\ \ \mbox{div }u=0,\ \  u=0 \mbox{ on } \partial D\right\}.
\end{equation}
$\bold{H}$ is a Hilbert space with inner product inherited from  $L^2(D)^3$ denoted by $\langle\cdot,\cdot\rangle$. 
Denoting by $\bold{H} '$ and $\bold{V}'$ the dual spaces, if we identify  $\bold{H}$ with  $\bold{H} '$
then we have the Gelfand triple $\bold{V}\subset \bold{H} \subset \bold{V}'$ with continuous injections.  The dual pairing between $\bold{V}$ and $\bold{V}'$ will be also denoted by $\langle\cdot,\cdot\rangle$. 
The norm in any space $X$ will be denoted by $\|\cdot\|_{X}$

%We define the spaces $\bold{H}$ and $\bold{V}$ as:
%\begin{equation*}
%\bold{H}=\{u\in L^2(D)^3 \ | \ \operatorname{div} u =0 \mbox{ in } D\},
%\end{equation*}
%\begin{equation*}
%\bold{V}=\{u\in H^1_0(D)^3 \ | \ \operatorname{div} u =0 \mbox{ in } D\}.
%\end{equation*}

The function $\alpha$ is positive and satisfies the following conditions:
\begin{equation}
\label{alpha1}
\alpha (\cdot, x) \in L^2_\#(Y),
\end{equation}
for every $x\in \mathbb{R}^3$.
\begin{equation}
\label{alpha2}
\alpha (y, \cdot) \in C_b^1(\mathbb{R}^3),
\end{equation}
for every $y \in Y$, with 
\begin{equation}
\label{alpha3}
\|\alpha (y, \cdot)  \|_{C_b^1(\mathbb{R}^3)} \leq C,
\end{equation}
independent of $y\in Y$. We remark that condition \eqref{alpha3} implies uniform boundedness for $\alpha$ on $Y\times \mathbb{R}^3$ as well as Lipschitz condition in the second variable, uniform with respect to the first one.

We also assume that $f\in L^2(0,T;L^2(D)^3)$.
%\begin{equation}
%\begin{split}
%v^\e(t,\omega) &= e^{-\frac{t}{\e}} v^\e_0 + \int_0^t \dfrac{1}{\e} e^{-\frac{t-s}{\e}} u^\e(s) ds + \int_0^t %\sqrt{\dfrac{\sigma^2}{2\e}} e^{-\frac{t-s}{\e}} dW(s)\\
%&=e^{-\frac{t}{\e}} v^\e_0 + e^{-\frac{t}{\e}} \int_0^\frac{t}{\e} e^s u^\e(\e s) ds + e^{-\frac{t}{\e}} %\int_0^\frac{t}{\e} \sqrt{\dfrac{\sigma^2 }{2\e}}  e^s dW(\e s)\\
%& = \wt{v}^\e(\dfrac{t}{\e}, \omega).
%\end{split}
%\end{equation}
%$e^{-\frac{t}{\e}} v^\e_0 + e^{-\frac{t}{\e}} \int_0^\frac{t}{\e} e^s u^\e(\e s) ds + e^{-\frac{t}{\e}} %\int_0^\frac{t}{\e} \sqrt{\dfrac{\sigma^2 }{2}}  e^s d \left( \frac{W(\e (s-\frac{t}{\e}) - W( - \e \frac{t}{\e}) }%{\sort{\e}}\right)$

%If $a(t,\omega) = e^{-t} \displaystyle\int_0^t \sqrt{\dfrac{\sigma^2 }{2}}  e^s d W(s)$, then
%\begin{equation}
%\label{attr}
%\lim_{t\to\infty} a(t,\theta_{-t} \omega) = a(\omega) = \int_{-\infty}^0 e^s \sqrt{\dfrac{\sigma^2}{2}} d %W(s),
%\end{equation}
%where $\theta_t \omega (\cdot) = \omega (t+\cdot) - \omega(t)$.

%\begin{equation}
%\alpha \in L^\infty (Y\times \mathbb{R}^3),\ \ \ \alpha(y,\eta_1)-\alpha(y,\eta_2) \leq C|\eta_1 -\eta_2|
%\end{equation}
%and
%\begin{equation}
%\alpha(y,\eta)\geq \alpha(y),\ \ \ \int_Y \alpha(y) >0.
%\end{equation}

\section{Study of the system}
\label{sec2}
In this section we prove the existence and uniqueness of the solution for the system \eqref{system} as well as some uniform estimates.

\subsection{Well-posedness of the system \eqref{system}}
For any $\e>0$ we denote by $\alpha^\e$ the operator,
\begin{equation}\label{alpha}
\alpha^\e: L^2(D)^3 \to L^\infty(D),\ \ \alpha^\e(\eta) (x) = \alpha\left(\dfrac{x}{\e},\eta(x)\right).
\end{equation}
Let us show that $\alpha^\e$ is a well defined operator. Given the condition \eqref{alpha2}, we need only to show the measurability in $x$ of $\alpha^\e(\eta)$ for any $\eta \in L^2(D)^3$. For such a function, we consider a sequence $\eta_n \in C_0(D)$ convergent to $\eta$ pointwise in $D$. The function $(y,x) \to \alpha (y,\eta_n(x))$ is a Carath\'{e}odory function, measurable in $y$ and continuous in $x$, so $x\to \alpha\left(\dfrac{x}{\e} , \eta_n(x) \right)$ is measurable, and by the Lipschitz condition of $\alpha$ is pointwise convergent to $\alpha^\e(\eta)$, which shows that $\alpha^\e(\eta)$ is measurable. Moreover

\begin{theorem}\label{thexun}
Assume that $u^\e_0\in \bold{H}$ for every $\e>0$, then for each $T>0$, there exists a unique solution of the system \eqref{system}, $u^\e \in L^\infty(\Omega;C([0,T];\bold{H})\cap L^2(0,T;\bold{V}))$ and $v^\e \in L^2(\Omega;C([0,T];L^2(D)^3)$ in the following sense: $\mathbb{P}$ a.s.
\begin{equation}
\label{weaksole}
\int_D u^\e(t) \phi dx - \int_D u^\e_0 \phi dx + \int_0^t \int_D \nabla u^\e (s) \nabla \phi dx ds +\int_0^t \int_D \alpha^\e(v^\e) u^\e \phi dx ds = \int_0^t \int_D f(s) \phi dx ds,
\end{equation}
for every $t\in[0,T]$ and every $\phi \in \bold{V}$, and
\begin{equation}
\label{mildsole}
v^\e(t) = v^\e_0 e^{-t/\e} +\frac{1}{\e} \int_0^t u^\e(s) e^{-(t-s)/\e} ds +\frac{1}{\sqrt{\e}} \int_0^t  e^{-(t-s)/\e} dW(s).
\end{equation}

Moreover, if the initial conditions $u^\e_0$ are uniformly bounded in $\bold{H}$, then the solutions $u^\e$ satisfies the estimates:
\begin{equation}
\label{est1}
\sup_{\e > 0} \| u^\e \|_{L^\infty (\Omega;L^2(0,T;\bold{V}))} \leq C_T,
\end{equation}
\begin{equation}
\label{est2}
\sup_{\e > 0} \| u^\e \|_{L^\infty (\Omega;C([0,T];\bold{H}))} \leq C_T,
\end{equation}
and
\begin{equation}
\label{est3}
\sup_{\e > 0} \left\| \dfrac{\partial u^\e}{\partial t} \right\|_{L^\infty (\Omega;L^2(0,T;\bold{V}'))} \leq C_T.
\end{equation}
Also, if the initial conditions $v^\e_0$ are uniformly bounded in $L^2(\Omega; L^2(D)^3)$ we also have the estimate for $v^\e$:
\begin{equation}
\label{est4}
\sup_{\e > 0} \E \sup_{t\in[0,T]}  \| v^\e(t)\|^2_{L^2 (D)^3} \leq C_T.
\end{equation}
\end{theorem}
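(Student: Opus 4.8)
The plan is to proceed via a Galerkin approximation in the space variable, coupled with the explicit mild formulation of the SDE for $v^\e$. First I would fix $\e>0$ and let $\{e_k\}_{k\geq 1}$ be a smooth orthonormal basis of $\bold{H}$ contained in $\bold{V}$ (e.g.\ the eigenfunctions of the Stokes operator), writing $u_n^\e = \sum_{k=1}^n c_k^n(t) e_k$. Projecting the first equation of \eqref{system} onto $\mathrm{span}\{e_1,\dots,e_n\}$ produces, after substituting the explicit representation \eqref{mildsole} for $v_n^\e$ in terms of $u_n^\e$, a closed system of random integro-differential equations for the coefficients $(c_1^n,\dots,c_n^n)$; since $\alpha\in C^1_b$ the right-hand side is locally Lipschitz in $(c^n_1,\dots,c^n_n)$ and the stochastic integral term is a continuous Gaussian process, so a unique local solution exists pathwise, and the a priori bounds below make it global on $[0,T]$.

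The core of the argument is the uniform (in $n$ and $\e$) energy estimates. Testing the Galerkin equation with $u_n^\e$ itself and using $\langle \Delta u_n^\e, u_n^\e\rangle = -\|\nabla u_n^\e\|^2_{L^2}$, $\langle \nabla p_n^\e, u_n^\e\rangle = 0$ (divergence-free), and $|\langle \alpha^\e(v_n^\e)u_n^\e, u_n^\e\rangle| \leq C\|u_n^\e\|_{L^2}^2$ by the uniform bound \eqref{alpha3}, Gronwall's inequality with $f\in L^2(0,T;L^2(D)^3)$ yields \eqref{est1} and \eqref{est2}; crucially these are pathwise ($L^\infty$ in $\omega$) bounds because $\alpha$ is bounded deterministically and the first equation carries no stochastic forcing. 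The bound \eqref{est3} on $\partial_t u^\e$ follows by testing against arbitrary $\phi\in\bold{V}$, bounding $\langle \Delta u_n^\e,\phi\rangle$ by $\|\nabla u_n^\e\|_{L^2}\|\nabla\phi\|_{L^2}$ and the Brinkman term by $C\|u_n^\e\|_{L^2}\|\phi\|_{L^2}$, then using \eqref{est1}. For \eqref{est4}, I would apply Itô's formula to $\|v_n^\e(t)\|^2_{L^2}$ (or simply use \eqref{mildsole} directly): the drift term $-\frac2\e\langle v_n^\e - u_n^\e, v_n^\e\rangle$ gives $-\frac{2}{\e}\|v_n^\e\|^2 + \frac{2}{\e}\langle u_n^\e, v_n^\e\rangle \leq -\frac1\e\|v_n^\e\|^2 + \frac1\e\|u_n^\e\|^2$, the diffusion contributes $\frac{1}{\e}\mathrm{Tr}(Q)$, and after integrating, taking the supremum over $t$ via Burkholder--Davis--Gundy for the martingale part, and using \eqref{est2} for $u_n^\e$, one obtains \eqref{est4}; the seemingly dangerous $1/\e$ factors cancel against the exponential decay $e^{-(t-s)/\e}$ upon integration.

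With the uniform estimates in hand, I would pass to the limit $n\to\infty$: from \eqref{est1}--\eqref{est3} and Aubin--Lions, $(u_n^\e)$ is compact in $L^2(0,T;\bold{H})$ and converges (along a subsequence) weakly-$\ast$ in $L^\infty(0,T;\bold{H})$, weakly in $L^2(0,T;\bold{V})$, and strongly in $L^2(0,T;\bold{H})$; the strong convergence together with the Lipschitz continuity of $\alpha$ in its second argument and the linear dependence of $v_n^\e$ on $u_n^\e$ through \eqref{mildsole} lets me pass to the limit in the nonlinear term $\int_0^t\int_D \alpha^\e(v_n^\e)u_n^\e\phi\,dx\,ds$, yielding a weak solution satisfying \eqref{weaksole}--\eqref{mildsole}. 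For uniqueness, given two solutions $(u^{\e,1},v^{\e,1})$ and $(u^{\e,2},v^{\e,2})$ with the same data, I would set $w = u^{\e,1}-u^{\e,2}$, $z = v^{\e,1}-v^{\e,2}$, test the difference equation with $w$, and estimate the critical term
\[
\Big|\int_D \big(\alpha^\e(v^{\e,1})u^{\e,1} - \alpha^\e(v^{\e,2})u^{\e,2}\big)w\,dx\Big| \leq C\|z\|_{L^2}\|u^{\e,1}\|_{L^4}\|w\|_{L^4} + C\|w\|^2_{L^2};
\]
the first term is handled because from \eqref{mildsole} one has $\|z(t)\|_{L^2} \leq \frac1\e\int_0^t \|w(s)\|_{L^2}\,ds$, and because the strong solution $u^{\e,1}$ enjoys the $H^2$-regularity mentioned after the theorem statement (hence $u^{\e,1}\in L^\infty(0,T;\bold{V})\hookrightarrow L^\infty(0,T;L^4)$), so a Gronwall argument on $\|w(t)\|_{L^2}^2 + \|z(t)\|_{L^2}^2$ forces $w\equiv 0$, $z\equiv 0$. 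The main obstacle is precisely the nonlinear, non-monotone, non-autonomous term $\alpha^\e(v^\e)u^\e$: one must carefully exploit both the boundedness and Lipschitz structure of $\alpha$ from \eqref{alpha3} and the smoothing/contraction built into the mild formula \eqref{mildsole} (the $1/\e$ in the drift paired with the decaying kernel) to close the estimates — neither the existence limit nor the uniqueness argument goes through by soft arguments alone.
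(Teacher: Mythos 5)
Your overall strategy --- a pathwise Galerkin approximation, energy estimates uniform in $n$ and $\e$ obtained by testing with $u_n^\e$ and using the boundedness and positivity of $\alpha$, Aubin--Lions compactness, passage to the limit in the Brinkman term via the Lipschitz property of $\alpha$ and the linear dependence of $v^\e$ on $u^\e$ through \eqref{mildsole}, and a Gronwall argument for uniqueness --- is essentially the paper's. However, two of your steps do not close as written. First, your uniqueness argument invokes the $H^2$-regularity of $u^{\e,1}$, i.e.\ $u^{\e,1}\in L^\infty(0,T;\bold{V})\hookrightarrow L^\infty(0,T;L^4(D)^3)$. That regularity is only available (Theorem \ref{threg}) under the extra hypothesis $u_0^\e\in\bold{V}$, whereas Theorem \ref{thexun} assumes only $u_0^\e\in\bold{H}$; so as stated your proof does not give uniqueness in the class the theorem claims. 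The repair is what the paper does: bound the critical term by $C\|z\|_{L^2}\|u^{\e,1}\|_{L^4}\|w\|_{L^4}\le C\|z\|_{L^2}^2\|\nabla u^{\e,1}\|_{L^2}^2+\tfrac12\|\nabla w\|_{L^2}^2$ using $H^1\hookrightarrow L^4$ and Young, absorb the gradient term into the left-hand side, use $\|z(t)\|_{L^2}\le T\sup_{s\le t}\|w(s)\|_{L^2}$ from \eqref{mildsole}, and apply Gronwall with the merely time-integrable weight $\|\nabla u^{\e,1}(s)\|^2_{L^2}\in L^1(0,T)$, which requires only $u^{\e,1}\in L^2(0,T;\bold{V})$.

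Second, your route to \eqref{est4} does not yield a bound uniform in $\e$. After It\^{o}'s formula the martingale part is $M_t=\tfrac{2}{\sqrt\e}\int_0^t\langle v^\e,\sqrt{Q}\,dW\rangle$, and Burkholder--Davis--Gundy gives $\E\sup_{t\le T}|M_t|\le \tfrac{C}{\sqrt\e}\bigl(\E\int_0^T\|Q^{1/2}v^\e\|^2\,ds\bigr)^{1/2}$; this prefactor $\e^{-1/2}$ is not cancelled by the exponential kernel, so the ``dangerous $1/\e$ factors'' do disappear for $\sup_{t}\E\|v^\e(t)\|^2$ but not, by this argument, for the stronger quantity $\E\sup_{t}\|v^\e(t)\|^2$ appearing in \eqref{est4}. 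Controlling the expected supremum requires working with the mild formula and a maximal estimate for the Ornstein--Uhlenbeck stochastic convolution $\tfrac{1}{\sqrt\e}\int_0^te^{-(t-s)/\e}\sqrt{Q}\,dW(s)$, which is not a martingale; this is the one genuinely delicate estimate in the theorem (the paper itself treats it only lightly, via the a.s.\ finite random bound on $w^\e$), and your sketch passes over it.
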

\begin{proof}
We prove the existence of solutions through a Galerkin approximation procedure. We consider $(e_k)_{k>0}$ a sequence of linearly independent elements in $\bold{V}\cap L^\infty(D)^3$ such that $span\{e_k\ | \ k>0\}$ is dense in $\bold{V}$. We define the $n$-dimensional space $\bold{V}_n$ for every $n>0$ as $span\{e_k\ | \ 1\leq  k \leq n\}$ and we denote by $\Pi_n$ the projection operator from $\bold{V}$ onto $\bold{V}_n$. 

Let us denote by $w^\e(t)$ the following process
\begin{equation}\label{w}
w^\e(t)=e^{-t/\e}v_0^\e+\frac{\sqrt{Q}}{\sqrt{\e}} \int_0^t  e^{-(t-s)/\e} dW(s) \in L^2(\Omega;C([0,T];L^2(D)^3).
\end{equation}

Moreover, there exists a random constant $C(\omega)$ almost surely finite such that
\begin{equation*}
\sup_{t\in[0,T]}  \| w^\e(t)\|^2_{L^2 (D)^3} \leq C(\omega).
\end{equation*}

Now, in order to prove the existence of solutions, we will proceed using a path wise argument; we fix $\omega\in\Omega$ and define  the Galerkin approximation 
$$(u^\e_n(\omega),z^\e_n(\omega) )\in C([0,T];\bold{H}_n)\cap L^2(0,T;\bold{V}_n))\times C([0,T];\Pi_n L^2(D)^3)$$ 
solution of the following system

\begin{equation}
\label{weaksolen}
\int_D\frac{\partial u^\e_n} {\partial t} (t) \phi dx + \int_D \nabla u^\e (t) \nabla \phi dx + \int_D \alpha^\e(z^\e_n(t)+w^\e(t)) u_n^\e(t) \phi dx =  \int_D f(t) \phi dx,
\end{equation}
for every $\phi \in \bold{V}_n$, $u^\e_n(0) =\Pi_n u^\e_0$,
\begin{equation}
\label{mildsolen}
\frac{\partial z^\e_n}{\partial t}(t) = -\frac{1}{\e}(z^\e_n(t)-u^\e_n(t)),\quad z^\e_n(0)=0,
\end{equation}
where 
\begin{equation}
\label{vne}
z^\e_n(t)=v^\e_n (t)- w^\e(t).
\end{equation}

Then,  we pass to the limit on $(u^e_n,z^\e_n )$ when $n \to \infty$. 

We write $u_n^\e(\omega,t,x) = \sum_{k=1}^n a^\e_k(\omega,t) e_k(x)$ and 
$z_n^\e(\omega,t,x) = \sum_{k=1}^n b^\e_k(\omega,t) e_k(x)$, and get the following system for the coefficients $a^\e_k$ and $b^\e_k$:
\begin{equation}
\label{weaksolen'}
\left\{
\begin{array}{rll}
& \displaystyle\sum_{k=1}^n\frac{\partial a^\e_k}{\partial t}(\omega,t) \displaystyle\int_D e_k e_j dx+\sum_{k=1}^n a^\e_k(\omega,t) \int_D \nabla e_k \nabla e_j dx &+\\
&\displaystyle \sum_{k=1}^n\int_D a^\e_k(\omega,t) \alpha^\e\left(w(\omega,t)+\sum_{k=1}^n b_k^\e(\omega,t) e_k\right) e_k e_jdx&= \displaystyle \int_D f(t) e_j dx,\\
\\
& \displaystyle\frac{\partial b^\e_k}{\partial t}(\omega,t)&=-\dfrac{1}{\e}\left( b^\e_k - a_k^\e \right),\ 1\leq k \leq n\\
\\
&a_k^\e(\omega,0)&=\displaystyle\int_D u^\e_0 e_k dx,\ 1\leq k \leq n\\
\\
&b_k^\e(\omega,0)&=0,\ 1\leq k \leq n
\end{array}
\right.
\end{equation}
for each $1\leq j \leq n$.
We make the following notations: $$a_{ij}=\displaystyle\int_D e_i(x) e_j(x) dx, \ b_{ij}=\displaystyle\int_D \nabla e_i(x) \nabla e_j(x) dx, \ f_j(s) = \int_D f(s,x) e_j(x) dx,$$
and
$$(F^\e_n)_{ij}(\omega,t,b_1,...b_n)= \int_D \alpha^\e\left(w(\omega,t)+\sum_{k=1}^n b_k e_k\right) e_i e_jdx$$
and the system is written with these notations as:
\begin{equation}
\label{weaksolen''}
\left\{
\begin{array}{rll}
& \displaystyle\sum_{k=1}^n\frac{\partial a^\e_k}{\partial t} a_{kj}+\sum_{k=1}^n a^\e_k b_{kj}+ \sum_{k=1}^n a^\e_k (F^\e_n)_{kj}(\omega,t,b_1^\e,...,b_n^\e)&= f_j(t),\\
\\
& \displaystyle\frac{\partial b^\e_k}{\partial t}&=-\dfrac{1}{\e}\left( b^\e_k - a_k^\e \right),\ 1\leq k \leq n\\
\\
&a_k^\e(0)&=\displaystyle\int_D u^\e_0 e_k dx,\ 1\leq k \leq n\\
\\
&b_k^\e(0)&=0,\ 1\leq k \leq n
\end{array}
\right.
\end{equation}
for each $1\leq j \leq n$.
Given the linearly independence of the sequence $(e_{k})_{k>0}$, the definition of the functions $(F^\e_n)_{ij}$ and the Lipschitz condition satisfied by $\alpha$, the system has unique solution $(a^\e_k)_{1\leq k \leq n}$,  
$(b^\e_k)_{1\leq k \leq n}$ in $C^1[0,T]$ for every $T>0$. This means that $u_n^\e$ and $z_n^\e=v_n^\e-w^\e$ is a solution for:
\begin{equation}
\label{weaksolen'''}
\left\{
\begin{array}{rll}
&\displaystyle\int_D\frac{\partial u^\e_n} {\partial t} (t) \phi dx + \int_D \nabla u_n^\e (t) \nabla \phi dx + \int_D \alpha^\e(z^\e_n(t)+w^\e(t)) u_n^\e(t) \phi dx &= \displaystyle\int_D f(t) \phi dx,\\
&d z_n^\e &=-\dfrac{1}{\e}\left( z^\e_n - u_n^\e \right),\\
\\
&u_n^\e(0)&=\Pi_n  u^\e_0,\\
\\
&z_n^\e(0)&=0,
\end{array}
\right.
\end{equation}

%\begin{equation}
%\label{weaksolen'''}
%\left\{
%\begin{array}{rll}
%&\displaystyle\int_D\frac{\partial u^\e_n} {\partial t} (t) \phi dx + \int_D \nabla u_n^\e (t) \nabla \phi dx + \int_D %\alpha^\e(v^\e_n(t)) u_n^\e(t) \phi dx &= \displaystyle\int_D f(t) \phi dx,\\
%&d v_n^\e &=-\dfrac{1}{\e}\left( v^\e_n - u_n^\e \right) + \dfrac{1}{\sqrt{\e}} dW(s),\\
%\\
%&u_n^\e(0)&=\Pi_n  u^\e_0,\\
%\\
%&v_n^\e(0)&=v_0^\e,
%\end{array}
%\right.
%\end{equation}
for every $\phi\in \bold{V}_n$. We take  $\phi= u_n^\e$ in \eqref{weaksolen'''}, use the positivity of $\alpha$ to derive that
\begin{equation}\nonumber
\begin{split}
\frac{\partial}{\partial t} \|u^\e_n \|^2_{L^2(D)^3} &\leq \|f(t)\|^2_{L^2(D)^3} +  \|u^\e_n \|^2_{L^2(D)^3}\Rightarrow\\
 \|u^\e_n \|^2_{L^2(D)^3} &\leq e^t \left( \|f\|_{L^2(0,T;L^2(D)^3)} + \|u^\e_0\|_{L^2(D)^3}\right),
\end{split}
\end{equation}
so
\begin{equation}
\label{estune1}
\sup_{n>0}\| u_n^\e\|_{L^\infty(0,T;L^2(D)^3)} \leq C_T(1+\|u^\e_0\|_\bold{H}).
\end{equation}
We also obtain that
\begin{equation}\nonumber
\begin{split}
\int_0^T  \|\nabla u^\e_n \|^2_{L^2(D)^{n\times n}} ds +\frac{1}{2} \| u_n^\e(T)\|^2_{L^2(D)^3}&\leq \int_0^T \int_D f(t) u_n^\e dxdt +  \frac{1}{2}\|u^\e_0 \|^2_{L^2(D)^3}\Rightarrow\\
\int_0^T  \|\nabla u^\e_n \|^2_{L^2(D)^{n\times n}} ds&\leq T \|f\|_{L^2(0,T;L^2(D)^3)}\| u_n^\e\|_{L^2(D)^3} +\frac{1}{2} \|u^\e_0\|_{L^2(D)^3},
\end{split}
\end{equation}
so 
\begin{equation}
\label{estune2}
\sup_{n>0}\| u_n^\e\|_{L^\infty(0,T;\bold{V})} \leq C_T(1+\|u^\e_0\|_{\bold{H}}).
\end{equation}
The estimates \eqref{estune1} and \eqref{estune2} imply using the first equation of the system \eqref{weaksolen'''} that
\begin{equation}
\label{estune3}
\sup_{n>0}\left\|\dfrac{\partial u_n^\e}{\partial t}\right\|_{L^\infty(0,T;\bold{V}_n')} \leq C_T(1+\|u^\e_0\|_{\bold{H}}).
\end{equation}

This means that  the sequence $u_n^\e$ is bounded in $L^2(0,T;\bold{V})\cap W^{1,2}(0,T;\bold{V}')$ which is compactly embedded in $L^2(0,T;\bold{H})$ (Theorem 2.1, page 271 from \cite{temam}).  Hence, there exists a 
subsequence $u_{n'}^\e(\omega)$ that converges strongly in $L^2(0,T;\bold{H})$ to some $u^\e(\omega)$ which is also a weak limit in $L^2(0,T;\bold{V})$ and a weak$^*$ limit in $L^\infty(0,T;\bold{H})$ and 
$u^\e \in L^2(0,T;\bold{V})\cap L^\infty (0,T;\bold{H})$.

We also have from \eqref{weaksolen'''} that
$$z^\e_{n'}(\omega,t)=\displaystyle\frac{1}{\e}\int_0^t e^{-(t-s)/\e} u^\e_{n'}(s)$$

will converge to $z^\e(\omega,t)=\displaystyle\frac{1}{\e}\int_0^t e^{-(t-s)/\e} u^\e(s)$ in $C([0,T];\bold{H})$.

We now pass to the limit when $n'\to\infty$ in the system \eqref{weaksolen'''} pointwise in $\omega\in\Omega$. We integrate the first equation over $[0,T]$, use the convergences of the sequences $u^\e_{n'}$ and $\dfrac{\partial u^\e_{n'}}{\partial t}$, so we get  that for every $\phi \in \bold{V}_n$
$$\lim_{n'\to\infty} \int_0^t \int_D\frac{\partial u^\e_{n'}} {\partial t} (s,x) \phi(x) dx ds=  \int_0^t\int_D\frac{\partial u^\e} {\partial t}(s,x) \phi (x)dxds$$
and
$$\lim_{n'\to\infty}   \int_0^t\int_D \nabla u_{n'}^\e (s) \nabla \phi dx ds=  \int_0^t\int_D \nabla u^\e (s) \nabla \phi dx ds .$$

Also
\begin{equation}
\begin{split}
\left|\int_0^t \int_D \alpha^\e(z^\e_{n'}+w^\e) u_{n'}^\e \phi  dxds- 
\int_0^t\int_D \alpha^\e(z^\e+w^\e) u^\e \phi  dx \right| &\leq \\
\left| \int_0^t\int_D \alpha^\e(z^\e_{n'}+w^\e) (u_{n'}^\e-u^\e) \phi  dxds \right|
+ \left|\int_0^t\int_D \left(\alpha^\e(z^\e_{n'}+w^\e) -\alpha^\e(z^\e+w^\e)\right) u^\e \phi dxds\right| &\leq \\
C \int_0^t\int_D (u_{n'}^\e-u^\e)^2 dxds+ C \int_0^t\int_D \left|z^\e_{n'} -z^\e\right| |u^\e| |\phi | dxds& \leq \\
C \| u_{n'}^\e-u^\e\|^2_{L^2(0,T;\bold{H})} + C \int_0^T \| z^\e_{n'} -z^\e \|_\bold{H} \| u^\e \|_\bold{H} \| \phi \|_{L^\infty(D)^3}.
\end{split}
\end{equation}
so we obtain that
$$\lim_{n'\to\infty}   \int_0^t\int_D \alpha^\e(z^\e_{n'}+w^\e) u_{n'}^\e \phi dxds =  
\int_0^t\int_D \alpha^\e(z^\e+w^\e) u^\e \phi dx ds .$$ 
We use the convergence for $z_{n'}^\e$  and obtain in the limit:
\begin{equation}
\label{weaksolen''''}
\left\{
\begin{array}{rll}
&\displaystyle\int_0^t \int_D\frac{\partial u^\e} {\partial t}(t) \phi  dxds + \int_0^t \int_D \nabla u^\e (t) \nabla \phi  dxds + \int_0^t \int_D \alpha^\e(z^\e+w^\e) u^\e \phi  dxds &= \displaystyle \int_0^t \displaystyle\int_D f(t) \phi  dxds,\\
&d z^\e &=-\dfrac{1}{\e}\left( z^\e - u^\e \right),\\
\\
&u^\e(0)&=u^\e_0,\\
\\
&z^\e(0)&=0,
\end{array}
\right.
\end{equation}
for every $\phi\in \bold{V}_n$, so by density it is true for any $\phi \in \bold{V}$.  Now, let  
$v^\e:=z^\e+w^\e$, then we deduce that $(u^\e, v^\e)$ is a solution for our initial system in the sense given by \eqref{weaksole} and \eqref{mildsole}. The solution   $(u^\e, v^\e)$ is measurable as the limit of the Galerkin approximation  $(u_n^\e, v_n^\e)$ which is measurable by construction.
Furthermore, given the uniform estimates for $u^\e_0$ it is easy to  obtain from \eqref{estune1}--\eqref{estune3} the estimates \eqref{est1}--\eqref{est3}.

Now, we prove the uniqueness. Let us assume the we have two solutions $\{u^\e_1, v^\e_1\}$ and $\{u^\e_2,v^\e_2\}$ for the system. Then,
\begin{equation}\nonumber
\begin{split}
\int_D (u^\e_2(t) -u^\e_1(t)) \phi dx  + \int_0^t \int_D (\nabla u^\e_2 (s) -\nabla u^\e_1(s)) \nabla \phi dx ds =
\int_0^t \int_D (\alpha^\e(v^\e_1) u^\e_1 - \alpha^\e(v^\e_2) u^\e_2)\phi dx ds,
\end{split}
\end{equation}
and
\begin{equation}
\nonumber
v^\e_2(t)-v^\e_1(t) = \frac{1}{\e} \int_0^t (u^\e_2(s)-u^\e_1(s)) e^{-(t-s)/\e} ds.
\end{equation}
we take $\phi = u^\e_2 - u^\e_1$ and we get:
\begin{equation}
\nonumber
\begin{split}
\int_D (u^\e_2(t) -u^\e_1(t))^2 dx  + \int_0^t \int_D (\nabla u^\e_2 (s) -\nabla u^\e_1(s))^2 dx ds =\\
-\int_0^t \int_D \alpha^\e(v^\e_2) (u^\e_2 - u^\e_1)^2 + \int_0^t \int_D (\alpha^\e(v^\e_1)-\alpha^\e(v^\e_2) )u^\e_1 (u^\e_2 - u^\e_1)\leq \\
C \int_0^t \int_D |v^\e_2-v^\e_1| |u^\e_1| u^\e_2-u^\e_1| dx ds \leq \\
C \int_0^t \|v^\e_2 - v^\e_1 \|_{L^2(D)^3} \|u^\e_1 \|_{L^4(D)^3}\|u^\e_2 - u^\e_1 \|_{L^4(D)^3} \leq \\
C \left(\int_0^t \|v^\e_2 - v^\e_1 \|^2_{L^2(D)^3} \|u^\e_1 \|^2_{L^4(D)^3}ds \right)^{1/2} \left(\int_0^t \|u^\e_2 - u^\e_1 \|^2_{L^4(D)^3}\right)^{1/2}\leq \\
C \int_0^t \|v^\e_2 - v^\e_1 \|^2_{L^2(D)^3} \|\nabla u^\e_1 \|^2_{L^2(D)^{3\times 3}}ds +\dfrac{1}{2} \int_0^t \|\nabla u^\e_2 - \nabla u^\e_1 \|^2_{L^2(D)^{3\times 3}}
\end{split}
\end{equation}
where we used H\"{o}lder's inequality and the imbedding of $H^1(D)^3$ into $L^4(D)^3$. Now, 
$$ \|v^\e_2(t) - v^\e_1(t) \|_{L^2(D)^3} \leq \dfrac{1}{\e} \int_0^t \| u^\e_2(s)-u^\e_1(s)\|_{L^2(D)^3} e^{-(t-s)/\e} \leq T \sup_{s\in [0,t]} \| u^\e_2(s)-u^\e_1(s)\|_{L^2(D)^3},$$
%and
%$$\|u^\e_2 - u^\e_1 \|_{L^4(D)^3} \leq \|u^\e_2 - u^\e_1 \|^{1/4}_{L^2(D)^3}\|\nabla u^\e_2 - \nabla u^\e_1 \|_{L^2(D)^{3\times 3}}^{3/4},$$
so we obtain:
\begin{equation}
\nonumber
\begin{split}
\sup_{s\in[0,t]}\|u^\e_2(t) - u^\e_1(t)\|^2_{L^(D)^3}  \leq 
C T \int_0^t  \sup_{r\in [0,s]} \| u^\e_2(r)-u^\e_1(r)\|_{L^2(D)^3}  \|\nabla u^\e_1(s) \|^2_{L^2(D)^{3\times 3}}ds.
\end{split}
\end{equation}
We use Gr\"{o}nwall's lemma for the function $\sup_{s\in[0,t]}\|u^\e_2(t) - u^\e_1(t)\|^2_{L^(D)^3}$ to obtain that:
$$\sup_{s\in[0,t]}\|u^\e_2(t) - u^\e_1(t)\|^2_{L^(D)^3} \leq \|u^\e_2(0) - u^\e_1(0)\|^2_{L^(D)^3} e^{C T \displaystyle \int_0^t \|\nabla u^\e_1(s) \|^2_{L^2(D)^{3\times 3}}ds},$$
which gives the uniqueness and this completes the proof.
\end{proof}

\begin{theorem}
\label{threg}
Assume that the initial conditions $u_0^\e$ are uniformly bounded in $\bold{V}$. Then the solution $u^\e$ will satisfy the improved estimates:
\begin{equation}
\label{est1'}
\sup_{\e > 0} \| u^\e \|_{L^\infty (\Omega;L^2(0,T;H^2(D)^3))} \leq C_T,
\end{equation}
\begin{equation}
\label{est2'}
\sup_{\e > 0} \| u^\e \|_{L^\infty (\Omega;C([0,T];\bold{V}))} \leq C_T,
\end{equation}
and
\begin{equation}
\label{est3'}
\sup_{\e > 0} \left\|\dfrac{ \partial  u^\e}{\partial t} \right\|_{L^\infty (\Omega;L^2(0,T; \bold{H})} \leq C_T.
\end{equation}
\begin{proof}
To show these estimates we go back to the Galerkin approximation used to show the existence. In the system \eqref{weaksolen'''} we take $\phi=\dfrac{\partial u_{n}^\e}{\partial t} (t)$ and get
\begin{equation}\nonumber
\displaystyle\int_D\left| \frac{\partial u^\e_n} {\partial t} (t)\right|^2 dx + \int_D \nabla u_n^\e (t) \nabla\frac{\partial u^\e_n} {\partial t} (t) dx \leq C \left\| \frac{\partial u^\e_n} {\partial t} (t)\right\|_{L^2(D)^3}\left(\|f(t)\|_{L^2(D)^3} + \|u_n^\e (t)\|_{L^2(D)^3} \right).
\end{equation}
We integrate on $[0,t]$ and use the estimates already obtained for $u^\e_n$ to get:
\begin{equation}\nonumber
\int_0^t \left\| \frac{\partial u^\e_n} {\partial t} (s)\right\|^2_{\bold{H}}ds +  \left\| \nabla u^\e_n(t)\right\|^2_{\bold{H}} \leq \left\| \nabla u^\e_n(0)\right\|^2_{\bold{H}} + C\int_0^t \left\| \frac{\partial u^\e_n} {\partial t} (s)\right\|_{\bold{H}},
\end{equation}
and from here
$$\sup_{\e>0} \sup_{n>0} \int_0^t\left\| \frac{\partial u^\e_n} {\partial t} (s)\right\|^2_{\bold{H}}ds \leq C_T,$$
and
$$\sup_{\e>0} \sup_{n>0} \sup_{t\in[0,T]}\left\| \nabla u^\e_n(t)\right\|^2_{\bold{H}} \leq C_T,$$
which will give us by passing to the limit on the subsequence $u^\e_{n'}$ \eqref{est3'} and 
\begin{equation}
\nonumber
\sup_{\e > 0} \| u^\e \|_{L^\infty (\Omega;L^\infty(0,T;\bold{V}))} \leq C_T,
\end{equation}
We use now the first equation from \eqref{system} and the regularity theorem for the stationary Stokes equation from \cite{temam} to obtain \eqref{est1'}. We get \eqref{est2'} by using Lemma 1.2, section 1.4 from \cite{temam}. 
\end{proof}
\end{theorem}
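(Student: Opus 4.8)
The plan is to return to the same Galerkin approximation $(u_n^\e,z_n^\e)$ used in the proof of Theorem \ref{thexun}, obtain a second level of $\e$-uniform a priori bounds by testing with the time derivative, and then bootstrap the spatial regularity by reading the momentum equation as a stationary Stokes problem at frozen time.

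The first step is to take $\phi=\dfrac{\partial u_n^\e}{\partial t}(t)$ in the first equation of \eqref{weaksolen'''}; this is legitimate since the coefficients $a_k^\e$ were shown to be $C^1$, so $\partial_t u_n^\e(t)\in\bold V_n$. Using $\int_D\nabla u_n^\e\cdot\nabla\partial_t u_n^\e\,dx=\tfrac12\tfrac{d}{dt}\|\nabla u_n^\e\|_{\bold H}^2$ one gets
\[
\left\|\frac{\partial u_n^\e}{\partial t}(t)\right\|_{\bold H}^2+\frac12\frac{d}{dt}\|\nabla u_n^\e(t)\|_{\bold H}^2=\int_D f(t)\,\partial_t u_n^\e(t)\,dx-\int_D\alpha^\e(z_n^\e(t)+w^\e(t))\,u_n^\e(t)\,\partial_t u_n^\e(t)\,dx.
\]
Here only the uniform bound \eqref{alpha3} on $\alpha$ is needed (neither the Lipschitz property nor the specific value of the random, $\e$-dependent shift $w^\e$ matters, since $w^\e$ merely enters the argument of a uniformly bounded function), so by Cauchy--Schwarz and Young's inequality the right-hand side is bounded by $\tfrac12\|\partial_t u_n^\e\|_{\bold H}^2+C\bigl(\|f(t)\|_{\bold H}^2+\|u_n^\e(t)\|_{\bold H}^2\bigr)$, and the first term is absorbed on the left. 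Integrating on $[0,t]$, using $u_n^\e(0)=\Pi_n u_0^\e$ with $\|\nabla\Pi_n u_0^\e\|_{\bold H}\le\|u_0^\e\|_{\bold V}$ (since $\Pi_n$ is the $\bold V$-orthogonal projection), and inserting the already established $\e$-uniform bounds \eqref{estune1}--\eqref{estune2} on $u_n^\e$ together with $f\in L^2(0,T;\bold H)$, I would obtain
\[
\sup_{\e>0}\sup_{n>0}\int_0^T\left\|\frac{\partial u_n^\e}{\partial t}(s)\right\|_{\bold H}^2 ds\le C_T,\qquad\sup_{\e>0}\sup_{n>0}\sup_{t\in[0,T]}\|\nabla u_n^\e(t)\|_{\bold H}^2\le C_T.
\]
Passing to the limit along the subsequence $u_{n'}^\e$ and using weak and weak-$*$ lower semicontinuity of norms then gives \eqref{est3'} and $\sup_{\e>0}\|u^\e\|_{L^\infty(\Omega;L^\infty(0,T;\bold V))}\le C_T$.

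For \eqref{est1'} the idea is to fix (a.e.) $t$ and view the first line of \eqref{system} as the stationary Stokes system $-\Delta u^\e(t)+\nabla p^\e(t)=g^\e(t)$, $\operatorname{div}u^\e(t)=0$ in $D$, $u^\e(t)=0$ on $\partial D$, with $g^\e(t):=f(t)-\partial_t u^\e(t)-\alpha^\e(v^\e(t))u^\e(t)$. By the previous step and \eqref{alpha3}, $g^\e\in L^2(0,T;\bold H)$ with an $\e$-uniform bound, so the regularity theorem for the stationary Stokes problem from \cite{temam} yields $\|u^\e(t)\|_{H^2(D)^3}\le C\|g^\e(t)\|_{\bold H}$ with $C$ depending only on $D$; squaring and integrating in $t$ gives \eqref{est1'}. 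Finally \eqref{est2'} follows from $u^\e\in L^2(0,T;H^2(D)^3\cap\bold V)$ together with $\partial_t u^\e\in L^2(0,T;\bold H)$ via the standard interpolation lemma (Lemma 1.2, Section 1.4 of \cite{temam}), which embeds this class into $C([0,T];\bold V)$.

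The computations are essentially routine; the points needing care are (i) that $\partial_t u_n^\e$ is admissible as a test function (it is, living in $\bold V_n$), (ii) that every constant be independent of both $n$ and $\e$, which is where one uses that the Stokes constant depends only on $D$ and that the Theorem \ref{thexun} bounds are already $\e$-uniform once $u_0^\e$ is uniformly bounded in $\bold V\subset\bold H$, and (iii) that the non-autonomous, non-Lipschitz term $\alpha^\e(\cdot)u^\e$ and the shift $w^\e$ are harmless at this level because only the $L^\infty$ bound \eqref{alpha3} is used. I expect the mildest obstacle to be bookkeeping in the interpolation step for \eqref{est2'}, but that is entirely standard once \eqref{est1'} and \eqref{est3'} are in hand.
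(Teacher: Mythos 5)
Your proposal is correct and follows essentially the same route as the paper: testing the Galerkin system with $\partial_t u_n^\e$ to get $\e$- and $n$-uniform bounds on $\|\partial_t u_n^\e\|_{L^2(0,T;\bold H)}$ and $\sup_t\|\nabla u_n^\e(t)\|_{\bold H}$, passing to the limit, then invoking the stationary Stokes regularity theorem for \eqref{est1'} and the interpolation lemma from \cite{temam} for \eqref{est2'}. The only (cosmetic) difference is that you absorb the $\|\partial_t u_n^\e\|_{\bold H}$ term immediately via Young's inequality, whereas the paper carries the linear term $C\int_0^t\|\partial_t u_n^\e\|_{\bold H}$ into the integrated inequality before absorbing it.
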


\section{The fast motion equation}
\label{sec3}

In this section, we present some facts for the invariant measure associated
with (\ref{v}) and introduce an auxiliary eigenvalue problem.
We consider the following problem for fixed $\xi \in L^2(D)^3$:
\begin{equation}
\label{v}
\left\{
\begin{array}{ll}
dv^\xi  &= - (v^\xi-\xi)dt + \sqrt{Q} dW,\\
v(0) &= \eta.
\end{array}
\right.
\end{equation}
This equation admits a unique mild solution $v^\xi(t)\in L^2(\Omega; C(0,T;L^2(D)^3))$ given by:
\begin{equation}
\label{vxieta}
v^\xi(t) = \eta e^{-t} +\xi(1-e^{-t}) + \int_0^t e^{-(t-s)}\sqrt{Q} dW .
\end{equation}
When needed to specify the dependence with respect to the initial condition the solution will be denoted by $v^{\xi,\eta}(t)$. The following estimate can be 
derived for $v^{\xi,\eta}(t)$.
\begin{lemma}
\label{l}
\begin{equation}
\E \| v^{\xi,\eta}(t) \|^2_{L^2(D)^3} \leq \|\eta\|^2_{L^2(D)^3} e^{-t} + \|\xi\|^2_{L^2(D)^3} + Tr Q.
\end{equation}
\end{lemma}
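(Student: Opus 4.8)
The plan is to compute the expectation directly from the explicit mild solution formula \eqref{vxieta}. Writing
\[
v^{\xi,\eta}(t) = \eta e^{-t} + \xi(1-e^{-t}) + \int_0^t e^{-(t-s)}\sqrt{Q}\, dW(s),
\]
I would take the $L^2(D)^3$-norm, square it, and take expectations. The deterministic part $\eta e^{-t} + \xi(1-e^{-t})$ and the stochastic convolution are handled separately, since the Wiener integral has mean zero and is therefore uncorrelated (in the $L^2(D)^3$ inner product, after taking $\E$) with the deterministic terms. Concretely, $\E\|v^{\xi,\eta}(t)\|^2_{L^2(D)^3} = \|\eta e^{-t} + \xi(1-e^{-t})\|^2_{L^2(D)^3} + \E\left\|\int_0^t e^{-(t-s)}\sqrt{Q}\,dW(s)\right\|^2_{L^2(D)^3}$.

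For the deterministic part I would use the elementary inequality coming from convexity: since $e^{-t} + (1-e^{-t}) = 1$ with both coefficients in $[0,1]$, one gets $\|\eta e^{-t} + \xi(1-e^{-t})\|^2 \le e^{-t}\|\eta\|^2 + (1-e^{-t})\|\xi\|^2 \le \|\eta\|^2 e^{-t} + \|\xi\|^2$, which already matches the first two terms in the claimed bound. For the stochastic convolution I would invoke the It\^o isometry for Hilbert-space-valued stochastic integrals:
\[
\E\left\|\int_0^t e^{-(t-s)}\sqrt{Q}\,dW(s)\right\|^2_{L^2(D)^3} = \int_0^t e^{-2(t-s)}\,\mathrm{Tr}\,Q\, ds = \mathrm{Tr}\,Q\cdot \frac{1-e^{-2t}}{2} \le \mathrm{Tr}\,Q,
\]
using that $\|\sqrt{Q}\|^2_{HS} = \mathrm{Tr}\,Q$ and that $Q$ is trace class as assumed in the statement of the model. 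Combining the two estimates yields the lemma.

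There is no serious obstacle here; the only points requiring a little care are (i) justifying that the cross term vanishes after taking expectation — which follows because the stochastic integral is a centered Gaussian $L^2(D)^3$-valued random variable and the deterministic terms are constants in $\omega$, so $\E\langle \text{deterministic}, \text{stochastic}\rangle = \langle \text{deterministic}, \E[\text{stochastic}]\rangle = 0$; and (ii) correctly identifying the Hilbert-Schmidt norm of $e^{-(t-s)}\sqrt{Q}$ on $L^2(D)^3$ as $e^{-(t-s)}(\mathrm{Tr}\,Q)^{1/2}$ so that the It\^o isometry produces $\int_0^t e^{-2(t-s)}\mathrm{Tr}\,Q\,ds$. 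Both are standard facts about stochastic convolutions for trace-class noise. One could alternatively derive a sharper bound with the factor $(1-e^{-2t})/2$ in front of $\mathrm{Tr}\,Q$, but since only the stated inequality is needed I would simply discard that factor at the end.
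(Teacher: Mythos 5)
Your proof is correct, and it takes a different (though equally standard) route from the paper. The paper's proof is a one-liner: apply It\^o's formula to $\|v^{\xi,\eta}(t)\|^2_{L^2(D)^3}$, which yields $\frac{d}{dt}\E\|v\|^2 = -2\E\|v\|^2 + 2\langle \E v,\xi\rangle + \mathrm{Tr}\,Q \le -\E\|v\|^2 + \|\xi\|^2 + \mathrm{Tr}\,Q$ after Young's inequality, and then Gr\"onwall gives the stated bound. You instead exploit the closed-form mild solution \eqref{vxieta} and compute the second moment directly: deterministic part by convexity of the squared norm, stochastic convolution by the It\^o isometry, cross term vanishing because the stochastic integral is centered. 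Both arguments are sound and give essentially the same constants; yours is more explicit and even yields the slightly sharper factors $(1-e^{-t})\|\xi\|^2$ and $\tfrac{1}{2}(1-e^{-2t})\mathrm{Tr}\,Q$, at the cost of relying on the linear structure of \eqref{v} (the explicit solution), whereas the It\^o--Gr\"onwall route would survive a nonlinear drift. For this Ornstein--Uhlenbeck-type equation the difference is immaterial, and your two points of care (the vanishing cross term and the Hilbert--Schmidt norm of $e^{-(t-s)}\sqrt{Q}$) are exactly the right ones to flag.
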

\begin{proof} It's enough to use the It\^o formula for $\| v^{\xi,\eta}(t) \|^2_{L^2(D)^3}$. 
\end{proof}
\subsection{The asymptotic behavior of the fast motion equation}
\label{subs21}
Let us define the transition semigroup $P_t^\xi$ associated to the equation \eqref{v}
\begin{equation}
P_t^\xi \Psi (\eta) = \E \Psi(v^{\xi,\eta}(t)),
\end{equation}
for every $\Psi \in B_b(L^2(D)^3)$ and every $\eta \in L^2(D)^3$.
% where $B_b(L^2(D)^3)$ is the space of real valued bounded Borel functions defined on $L^2(D)^3$ endowed %with the supremum norm:
%$$\|\Psi\|_{B_b(L^2(D)^3)} = \sup_{z\in L^2(D)^3} \left|\Psi(z) \right|.$$
It is easy to verify that $P_t^\xi$ is a Feller semigroup because $\mathbb{P}$ a.s.
\begin{equation}\label{feller}
\|v^{\xi,\eta_1} - v^{\xi,\eta_2}\|^2_{L^2(D)^3} \leq e^{-2t} \|\eta_1-\eta_2 \|_{L^2(D)^3}.
\end{equation}
We also denote by $\mu^\xi$ the associated invariant measure on $L^2(D)^3$. We recall that it is invariant for the semigroup $P_t^\xi$ if
$$\int_{L^2(D)^2} P_t^\xi \Psi (z) d\mu^\xi(z) = \int_{L^2(D)^2}  \Psi (z) d\mu^\xi(z),$$
for every $\Psi \in B_b(L^2(D)^3)$.
It is obvious that $v^\xi$ is a stationary gaussian process. The equation \eqref{v} admits a unique ergodic invariant measure $\mu^\xi$ that is strongly mixing and gaussian with mean $\xi$ and covariance operator $Q$. All these results can be found in \cite{DPZ2} or \cite{Cerrai}. 

We denote by $\mathcal{L}^\xi$ the Kolmogorov operator associated to the semigroup $P_t^\xi$, which is given by
\begin{equation}
\label{Kop}
\mathcal{L}^\xi \Psi(\eta) = \dfrac{\partial \Psi} {\partial \eta} (\eta) \cdot (-\eta+\xi) + \dfrac{1}{2} Tr\left[Q \dfrac{\partial ^2\Psi}{\partial \eta^2}\right],
\end{equation}
for every $\Psi \in C^2_b(L^2(D)^3)$.

For $\lambda>0$, let us introduce the eigenvalue problem associated to the operator $\mathcal{L}^\xi$:
\begin{equation}
\label{eigen}
\lambda \Psi(\eta) - \mathcal{L}^\xi \Psi (\eta) = F(\eta),
\end{equation}
for $\eta \in L^2(D)^3$ and $F\in C^1_b(L^2(D)^3)$. The equation \eqref{eigen} has a unique strict solution (see Section 9 from \cite{DPZ}). Moreover the solution is given by (see Theorem 3.8 in \cite{CF2008}):
\begin{equation*}
\Psi (\eta,\xi) = \int_0^\infty e^{-\lambda t} P_t^\xi F (\eta) dt.
\end{equation*}

\section{Passage to the limit}
\label{sec4}
The main goal of this section is to pass to the limit in the system \eqref{system} when $\e\to 0$.

We introduce the following averaged operators:
\begin{equation}
\label{defoalphae}
\o{\alpha^\e}: L^2(D)^3 \to L^\infty(D),\ \ \o{\alpha^\e}(\xi) = \int_{L^2(D)^3}\alpha^\e(\eta) d\mu^\xi (\eta)
\end{equation}

\begin{equation}
\label{defoalpha}
\o{\alpha} (\xi) = \int_{L^2(D)^3} \left(\int_Y \alpha(y,z)dy\right) d\mu^\xi(z).
\end{equation}

We remark that $\alpha^\e$ as an operator from $L^2(D)^3$ to $L^2(D)^3$ is Lipschitz and $L^2(D)^3$ is separable, so Pettis Theorem implies that $\alpha^\e: L^2(D)^3 \to L^2(D)^3$ is measurable. The boundedness of $\alpha^\e$ implies the integrability with respect to the probability measure $\mu^\xi$, so $\alpha^\e$ is well defined (see Chapter 5, Sections 4 and 5 from \cite{Yosida} for details). The same considerations hold also for the operators $z\in L^2(D)^3 \to \displaystyle\int_Y \alpha(y,z)dy \in L^\infty(D)^3$, so $\o{\alpha}$ is also well defined.

Our main result is given by the next theorem.
\begin{theorem}
\label{thconv}
Assume the sequence $u^\e_0$ is uniformly bounded in $\bold{V}$ and strongly convergent in $\bold{H}$ to some function $u_0$, and $v^\e_0$ is uniformly bounded in $L^2(\Omega;L^2(D)^3)$. Then, there exists $\o{u} \in L^2(0,T;\bold{V})$ such that $u^\e$ converges in probability to $\o{u}$ in $L^2(0,T;\bold{V})$ and $\o{u}$ is the solution of the following deterministic equation:
\begin{equation}
\label{eqou}
\left\{
\begin{array}{rll}
\dfrac{\partial \o{u}}{\partial t} &= \Delta \o{u} - \nabla \o{p} - \o{\alpha} (\o{u}) \o{u} + f &\mbox{ in }\ D , \\
\operatorname{div}\o{u} &=0 &\mbox{ in }\  D, \\
\o{u} &=0 &\mbox{ on }\  \partial D, \\
\o{u}(0) & = u_0&\mbox{ in }\  D.
\end{array}
\right.
\end{equation}
\end{theorem}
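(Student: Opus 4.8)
We follow the scheme of \cite{CF2008}, combining the stochastic averaging of the fast variable $v^\e$ with the periodic homogenization of $\alpha$ in the space variable.

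\emph{Step 1: compactness and reduction to one subsequential limit.} First I would invoke the uniform bounds of Theorems~\ref{thexun} and \ref{threg}: under the present hypotheses $\{u^\e\}$ is bounded in $L^\infty(\Omega;L^2(0,T;H^2(D)^3)\cap C([0,T];\bold{V}))$ with $\{\partial u^\e/\partial t\}$ bounded in $L^\infty(\Omega;L^2(0,T;\bold{H}))$ by \eqref{est1'}--\eqref{est3'}, and $\{v^\e\}$ bounded as in \eqref{est4}. Since the divergence-free $H^2$-functions embed compactly in $\bold{V}$ and $\bold{V}\hookrightarrow\bold{H}$ continuously, the Aubin--Lions--Simon lemma (cf. Theorem~2.1, p.~271 of \cite{temam}) shows that the laws of $u^\e$ on $L^2(0,T;\bold{V})$ are tight; by Prokhorov a subsequence converges in law to some $\o u$, and by the Skorokhod representation theorem we may realise this convergence almost surely on an auxiliary probability space, with $\o u\in L^2(0,T;\bold{V})$ (and, by the $C([0,T];\bold{V})$ bound, with the additional regularity needed below). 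Once the limit is identified as the unique solution of the \emph{deterministic} problem \eqref{eqou}, the subsequential limit is unique and non-random, which forces convergence in law of the whole family and hence convergence in probability; so it suffices to pass to the limit along one subsequence.

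\emph{Step 2: passage to the limit in the weak formulation.} I would pass to the limit in \eqref{weaksole}, tested against $\phi\in\bold{V}$ (it is enough to take $\phi$ in the dense subset $\bold{V}\cap L^\infty(D)^3$ and then use density). The terms $\int_D u^\e(t)\phi\,dx$, $\int_D u^\e_0\phi\,dx$, $\int_0^t\!\int_D\nabla u^\e\nabla\phi\,dx\,ds$ and $\int_0^t\!\int_D f\phi\,dx\,ds$ pass to the limit using the strong and weak convergence of $u^\e$ and the assumption $u^\e_0\to u_0$ in $\bold{H}$. Everything reduces to the term \eqref{term}, which I would write as
\[
\int_0^t\!\!\int_D\alpha^\e(v^\e)u^\e\phi\,dx\,ds=\int_0^t\!\!\int_D\big(\alpha^\e(v^\e)-\o{\alpha^\e}(u^\e)\big)u^\e\phi\,dx\,ds+\int_0^t\!\!\int_D\o{\alpha^\e}(u^\e)u^\e\phi\,dx\,ds.
\]

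\emph{Step 3: the two summands (the main obstacle).} The integrand of the first summand is $G^\e(v^\e(s),u^\e(s))$ with $G^\e(\eta,\xi)=\int_D(\alpha^\e(\eta)-\o{\alpha^\e}(\xi))\xi\phi\,dx$, which by the very definition \eqref{defoalphae} of $\o{\alpha^\e}$ is centred with respect to $\mu^\xi$; moreover $G^\e(\cdot,\xi)\in C^1_b(L^2(D)^3)$ because $\alpha\in C^1_b$ by \eqref{alpha2}--\eqref{alpha3}. Let $\Psi^\e$ be the solution of the auxiliary problem $c(\e)\Psi^\e-\mathcal{L}^\xi\Psi^\e=G^\e$, i.e. $\Psi^\e(\eta,\xi)=\int_0^\infty e^{-c(\e)t}P_t^\xi G^\e(\cdot,\xi)(\eta)\,dt$, with $c(\e)\to0$ to be chosen. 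The generator of the $v^\e$-dynamics in \eqref{system} with $u^\e$ frozen is $\tfrac1\e\mathcal{L}^{u^\e}$, so applying the It\^o formula to $\Psi^\e(v^\e,u^\e)$ (where $u^\e$ contributes only a bounded-variation, hence no cross-variation, term) and using $\mathcal{L}^{u^\e}\Psi^\e=c(\e)\Psi^\e-G^\e$ yields
\[
\int_0^t G^\e(v^\e,u^\e)\,ds=c(\e)\!\int_0^t\!\Psi^\e(v^\e,u^\e)\,ds-\e\big[\Psi^\e(v^\e(t),u^\e(t))-\Psi^\e(v^\e(0),u^\e(0))\big]+\e\!\int_0^t\!\left\langle\frac{\partial\Psi^\e}{\partial\xi},\frac{\partial u^\e}{\partial s}\right\rangle ds+\sqrt\e\!\int_0^t\!\left\langle\frac{\partial\Psi^\e}{\partial\eta},\sqrt Q\,dW\right\rangle.
\]
From the explicit Gaussian form \eqref{vxieta} of $v^{\xi,\eta}$ and the contraction \eqref{feller} (equivalently, the strong mixing of $P_t^\xi$), together with the $C^1_b$-bound \eqref{alpha3}, one proves, uniformly in $\e$ and locally uniformly in $\xi$, that $\|\Psi^\e\|_\infty+\|\partial_\eta\Psi^\e\|_\infty\le C$, while $\|\partial_\xi\Psi^\e\|_\infty\le C/c(\e)$ — the loss coming from the non-decaying dependence $\partial_\xi v^{\xi,\eta}(t)=(1-e^{-t})\,\mathrm{I}$ on the frozen variable and from $\partial_\xi\o{\alpha^\e}(\xi)$. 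Inserting the uniform estimates \eqref{est3'} and \eqref{est4}, taking expectations and using It\^o's isometry for the martingale term, the right-hand side is bounded by $Cc(\e)+C\e\,\E\|v^\e\|_\infty+C\e/c(\e)+C\sqrt\e$; the choice $c(\e)=\sqrt\e$ makes all of it tend to $0$, so $\int_0^t G^\e(v^\e,u^\e)\,ds\to0$ in probability. For the second summand I would combine the strong convergence $u^\e\to\o u$ in $L^2(0,T;\bold{V})$ with the periodic averaging of the $x/\e$-dependence: writing $\o{\alpha^\e}(\xi)(x)=\int_{L^2(D)^3}\alpha(x/\e,\eta(x))\,d\mu^\xi(\eta)$ through the Gaussian law $\mu^\xi$, using that $\alpha(\cdot,z)$ is $Y$-periodic together with \cite{A-2s} (Lemma~1.3), and applying Fubini to interchange the $Y$-average with the $\mu^\xi$-average, one obtains $\o{\alpha^\e}(u^\e)\rightharpoonup\o\alpha(\o u)$ (with $\o\alpha$ as in \eqref{defoalpha}), and since $u^\e\phi\to\o u\phi$ strongly the product converges to $\int_0^t\int_D\o\alpha(\o u)\o u\phi\,dx\,ds$. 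I expect the corrector estimates — establishing the uniform bounds on $\Psi^\e,\partial_\eta\Psi^\e,\partial_\xi\Psi^\e$ and balancing them against the rates $\e$ and $c(\e)$ — together with the rigorous justification of this mixed periodic/stochastic averaging to be the main difficulty.

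\emph{Step 4: the limit equation and upgrade to convergence in probability.} Collecting the limits shows that $\o u\in L^2(0,T;\bold{V})$ satisfies the weak form of \eqref{eqou}. This problem is deterministic, and since $\o\alpha$ is positive, bounded and Lipschitz in its argument (pointwise in $x$, because $\o\alpha(\xi)(x)=\int_{L^2}\int_Y\alpha(y,z(x)+\xi(x))\,dy\,d\mu^0(z)$ and $\int_Y\alpha(y,\cdot)\,dy\in C^1_b$ by \eqref{alpha3}), uniqueness for \eqref{eqou} follows by the same energy/Gr\"onwall argument as in the uniqueness part of Theorem~\ref{thexun}, with $\alpha^\e(v^\e)$ replaced by $\o\alpha(\o u)$ and without the coupling term. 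Hence the subsequential limit $\o u$ is independent of the subsequence and deterministic, so $u^\e\to\o u$ in law in $L^2(0,T;\bold{V})$ and, the limit being a constant random variable, in probability. This completes the proof.
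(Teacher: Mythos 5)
Your proposal is correct and follows essentially the same route as the paper: tightness plus Skorokhod representation, the auxiliary Poisson-type equation for $\Psi^\e$ with the estimates $\|\Psi^\e\|,\|\partial_\eta\Psi^\e\|\le C$ and $\|\partial_\xi\Psi^\e\|\le C/c(\e)$ and the choice $c(\e)=\sqrt{\e}$ in It\^{o}'s formula, Allaire's Lemma 1.3 for the periodic average, and the uniqueness of the deterministic limit problem to upgrade convergence in law to convergence in probability. The only cosmetic difference is that you split the key term into two summands where the paper uses three ($S^\e_1,S^\e_2,S^\e_3$), your second summand being handled by exactly the two ingredients (Lipschitz continuity of $\o{\alpha^\e}$ in its argument, then periodic averaging at the fixed limit $\o{u}$) that the paper separates into $S^\e_2$ and $S^\e_3$.
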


Let us explain the main ideas involved in the proof of this convergence. The uniform bounds for $u^\e$ provided by Theorem \ref{threg} imply that the sequence is tight in $L^2(0,T;H^1_0(D)^3)$, so there exists a limit $\o{u}$ in distribution. We apply after that Skorokhod theorem to get another sequence $\wt{u^\e}$ defined on some probability space $\Omega'$, with same distribution as $u^\e$ that converges for a.e. 
$\omega\in \Omega'$ to some $\wt{\o{u}}$ in $L^2(0,T; H^1(D)^3)$. We show that $\wt{\o{u}}$ is deterministic and get an equation for it by passing to the limit in expected value in the variational formulation. More precisely, we prove first that:
\begin{equation}
\label{dif}
\begin{split}
\lim_{\e \to 0} &\  \E \left | \int_0^T \int_D \left( \alpha^\e(v^\e(t)) u^\e(t) - \o{\alpha}(\o{u}(t)) \o{u}(t) \right)\phi \psi(t) dx dt \right | = 0,
\end{split}
\end{equation}
for every $\phi \in \bold{V}$ and any $\psi \in C[0,T]$ with $\psi(T)=0$.
We rewrite it as:
$$\int_0^T \int_D \left( \alpha^\e(v^\e(t)) u^\e(t) - \o{\alpha}(\o{u}(t)) \o{u}(t) \right)\phi \psi(t) dx dt =  S^\e_1 + S^\e_2 +S^\e_3, $$
where
\begin{equation}\nonumber
S^\e_1  = \int_0^T  \int_D \left(\alpha^\e(v^\e(t)) - \o{\alpha^\e}(u^\e(t))\right)u^\e(t)\phi \psi(t) dxdt,
\end{equation}
\begin{equation}\nonumber
S^\e_2  = \int_0^T \int_D \left( \o{\alpha^\e}(u^\e(t))u^\e(t) - \o{\alpha^{\e}}(\o{u}(t))\o{u}(t) \right)\phi \psi(t)dx dt,
\end{equation}
and
\begin{equation}\nonumber
S^\e_3  = \int_0^T  \int_D \left(\o{\alpha^{\e}}(\o{u}(t))\o{u}(t) - \o{\alpha}(\o{u}(t))\o{u}(t)\right)\phi \psi(t) dxdt.
\end{equation}
This convergence requires several preliminary steps. The first step is performed in Subsection \ref{subs42} where we get uniform estimates for $\Psi^\e(\eta,\xi)$, $\dfrac{\partial \Psi^\e}{\partial \eta}(\eta,\xi)$ and $\dfrac{\partial \Psi^\e}{\partial \xi}(\eta,\xi)$, where $\Psi^\e: L^2(D)^3 \times L^2(D)^3 \to \mathbb{R}$ solves 
\begin{equation}\label{eqpsie}
c(\e) \Psi^\e(\eta,\xi) - \mathcal{L}^\xi \Psi^\e(\eta,\xi) =  \int_D \left(\alpha^\e(\eta) - \o{\alpha^\e}(\xi)\right)\xi\phi dx.
\end{equation}
for $c(\e) >0$ to be chosen later. This is an eigenvalue problem of the type \eqref{eigen} so the solution is given by
\begin{equation}
\label{psie}
\Psi^\e (\eta,\xi) = \int_0^\infty e^{-c(\e) t} P_t^\xi \left[ \int_D \left(\alpha^\e(\cdot) - \o{\alpha^\e}(\xi)\right)\xi\phi dx (\eta)\right] dt.
\end{equation}
      
The second step is done in Subsection \ref{subs43} where we prove the convergence to $0$ for $S^\e_1$ by applying It\^{o}'s formula to $\Psi^\e(v^\e(t),u^\e(t))$ and getting an expression for $S^\e_1$ in terms of $\Psi^\e$ and its first order Fr\'{e}chet derivatives. The result is contained in Lemma \ref{lemmaconv}. In Subsection \ref{subs44} we do the third step, the convergence to $0$ of $S^\e_3$, which is showed in Lemma \ref{lemmaconv'}. 

The sequence $\wt{u^\e}$ given by Skorokhod theorem converges a.s.  to $\wt{\o{u}}$ strongly in $L^2(0,T;H^1_0(D)^3)$ so 
\begin{equation}\label{dif1}
\lim_{\e \to 0} \left | \int_0^T \int_D \left(\wt{u^\e} (t)-\wt{\o{u}} (t)\right) \phi \psi'(t) dx dt  - \int_0^T \int_D \left( \nabla \wt{u^\e} - \nabla\wt{\o{u}} \right) \nabla \phi \psi(t) dx dt\right | = 0, \quad a.s.  \end{equation}
%pointwise in $\Omega '$. 
The equations \eqref{dif} and \eqref{dif1} imply that $\wt{\o{u}}$ satisfies almost surely the variational formulation associated with \eqref{eqou}, so $\wt{\o{u}}$ and $\o{u}$ are deterministic and as a consequence the convergence of the sequence $u^\e$ to $\o{u}$ will be in probability.
Before proceeding with the proof of Theorem \ref{thexunou}, let us first study system  \eqref{eqou}.

\subsection{Well-possedness for the averaged equation \eqref{eqou}}
\label{subs41}
\begin{theorem}\label{thexunou}
Assume $f\in L^2(0,T;L^2(D)^3)$ and $\o{\alpha} \in C_b^1(\mathbb{R}^3)$ is a positive function. Then, for any $u_0 \in \bold{H}$ the system \eqref{eqou} admits a unique solution $\o{u} \in C([0,T];\bold{H})\cap L^2(0,T;\bold{V})$ with $\dfrac{\partial \o{u}}{\partial t} \in L^2(0,T;\bold{V}')$ in the following sense:
\begin{equation}
\label{weaksolou}
\int_D \o{u}(t) \phi dx - \int_D u_0 \phi dx + \int_0^t \int_D \nabla \o{u} (s) \nabla \phi dx ds +\int_0^t \int_D \o{\alpha}(\o{u}) \o{u} \phi dx ds = \int_0^t \int_D f(s) \phi dx ds,
\end{equation}
for every $t\in[0,T]$ and every $\phi \in \bold{V}$. Moreover, if the initial condition $u_0 \in \bold{V}$, then $\o{u}$ has the improved regularity, $\o{u} \in L^2(0,T;H^2(D)^3) \cap C([0,T];\bold{V})$ and $\dfrac{ \partial  \o{u}}{\partial t} \in L^2(0,T; \bold{H})$.
\begin{proof}
The proof of existence of solutions is similar to the proof  of system \eqref{system}, using a Galerkin approximation procedure. The finite dimensional approximation $\o{u}_n$ will exist as in Theorem \ref{thexun} and will solve
\begin{equation}
\label{weaksoloun}
\int_D\frac{\partial \o{u}_n} {\partial t} (t) \phi dx + \int_D \nabla \o{u}_n (t) \nabla \phi dx + \int_D \o{\alpha}(\o{u}_n) \o{u}_n \phi dx =  \int_D f(t) \phi dx,
\end{equation}
for every $\phi \in C([0,T], \bold{V}_n)$, and $\o{u}_n(0) =\Pi_n u_0$. We take $\phi = \o{u}_n(t)$, and get:
\begin{equation*}
\begin{split}
&\int_D\frac{\partial \o{u}_n} {\partial t} (t) \o{u}_n(t) dx + \int_D \|\nabla \o{u}_n (t)\|^2 dx \leq   \int_D f(t) u_n(t)dx \Rightarrow \\
&\dfrac{ \partial} {\partial t} \|\o{u}_n(t) \|^2_{L^2(D)^3} \leq \|f(t) \|^2_{L^2(D)^3} + \|\o{u}_n(t) \|^2_{L^2(D)^3} \Rightarrow \\
&\|\o{u}_n(t) \|^2_{L^2(D)^3} \leq c +\int_0^t \|\o{u}_n(s) \|^2_{L^2(D)^3} ds.
\end{split}
\end{equation*}
We use Gr\"{o}nwall's lemma and get:
\begin{equation}\label{estunou1}
\sup _{n > 0}\| \o{u}_n \|_{C([0,T]; \bold{H}} \leq C_T,
\end{equation}
and from here we also obtain
\begin{equation}\label{estunou2}
\sup _{n > 0}\|\nabla  \o{u}_n \|_{L^2(0,T;L^2(D)^{3 \times 3})} \leq C_T,
\end{equation}
and
\begin{equation}\label{estunou3}
\sup _{n > 0}\left\| \dfrac{ \partial \o{u}_n} {\partial t} \right\|_{L^2(0,T;\bold{V}')} \leq C_T.
\end{equation}
So there exists a subsequence $\o{u}_{n'}$ and a function $\o{u} \in L^\infty(0,T; {\bf H}) \cap L^2(0,T;\bold{V})$ such that $\o{u}_{n'}$ converges weakly star in $L^\infty(0,T; \bold{H})$ and weakly to $L^2(0,T;\bold{V})$ to $\o{u}$ and also $\dfrac{\partial \o{u}_{n'}}{\partial t} $ converges to $\dfrac{\partial \o{u}}{\partial t} $ weakly in $L^2(0,T;\bold{V}')$. We apply again now Theorem 2.1, page 271 and Lemma 1.2 page 260 from \cite{temam} to obtain that $\o{u}_{n'}$ converges strongly in $L^2(0,T;\bold{H})$ and in $C([0,T];\bold{H})$ to $\o{u}$. We then pass to the limit and obtain that $\o{u}$ is a weak solution for \eqref{eqou}.

Now, to show uniqueness we assume to have two solutions $\o{u}_1$ and $\o{u}_2$  in $ C([0,T];\bold{H})\cap L^2(0,T;\bold{V})$ and substract the variational formulations. We get:
\begin{equation}\nonumber
\begin{split}
\int_D (\o{u}_2(t) -\o{u}_1(t)) \phi dx  + \int_0^t \int_D (\nabla \o{u}_2 (s) -\nabla \o{u}_1(s)) \nabla \phi dx ds =
\int_0^t \int_D (\o{\alpha}(\o{u}_1) \o{u}_1 - \o{\alpha}(\o{u}_2) \o{u}_2)\phi dx ds.
\end{split}
\end{equation}
We take $\phi = \o{u}_2 - \o{u}_1$ and write
\begin{equation*}
\begin{split}
(\o{\alpha}(\o{u}_1) \o{u}_1 - \o{\alpha}(\o{u}_2) \o{u}_2) (\o{u}_2 - \o{u}_1) &= - \o{\alpha}(\o{u}_1) (\o{u}_2 - \o{u}_1)^2  + \o{u}_2 (\o{\alpha}(\o{u}_2) - \o{\alpha}(\o{u}_1))(\o{u}_2 - \o{u}_1)\\
&\leq C \o{u}_2 (\o{u}_2 - \o{u}_1)^2.
\end{split}
\end{equation*}
We get
\begin{equation*}
\begin{split}
\| \o{u}_2(t) -\o{u}_1(t) \|^2_{L^2(D)^3} + \int_0^t \| \nabla \o{u}_2(s) &- \nabla \o{u}_1(s)\|^2_{L^2(D)^{3 \times 3}} \leq C \int_0^t \int_D \o{u}_2(s) (\o{u}_2(s) - \o{u}_1(s))^2 dx ds\\
& \leq C\int_0^t \|\o{u}_2(s) \|_{L^2(D)^3} \| \o{u}_2(s) - \o{u}_1(s)\|^2_{L^4(D)^3} ds.
\end{split}
\end{equation*}
after using H\"{o}lder's inequality. Interpolation inequality and Young's inequality give 
\begin{equation*}
\begin{split}
\| \o{u}_2(s) - \o{u}_1(s)\|^2_{L^4(D)^3}& \leq \| \o{u}_2(s) - \o{u}_1(s)\|^{1/2}_{L^2(D)^3}\| \nabla \o{u}_2(s) - \nabla \o{u}_1(s)\|^{3/2}_{L^2(D)^{3\times 3}}\\
&\leq c(\e)  \| \o{u}_2(s) - \o{u}_1(s)\|^{2}_{L^2(D)^3} +\e \| \nabla \o{u}_2(s) - \nabla \o{u}_1(s)\|^{2}_{L^2(D)^{3\times 3}},
\end{split}
\end{equation*}
so we obtain for a convenient choice of $\e$
\begin{equation*}
\begin{split}
\| \o{u}_2(t) -\o{u}_1(t) \|^2_{L^2(D)^3}  \leq c(\e) \int_0^t \| \o{u}_2(s) -\o{u}_1(s) \|^2_{L^2(D)^3}.
\end{split}
\end{equation*}
We get uniqueness from here by applying Gr\"{o}nwall's lemma.

Let us now assume that the initial condition $u_0 \in \bold{V}$. We use the equation \eqref{weaksoloun} with $\phi = \dfrac{\partial \o{u}_n}{\partial t}$:
\begin{equation}
\label{weaksoloun'}
\int_D\left(\frac{\partial \o{u}_n} {\partial t} (t)\right)^2 dx + \int_D \nabla \o{u}_n (t) \nabla \dfrac{\partial \o{u}_n}{\partial t} (t) dx + \int_D \o{\alpha}(\o{u}_n(t)) \o{u}_n(t) \dfrac{\partial \o{u}_n}{\partial t} (t)dx =  \int_D f(t) \dfrac{\partial \o{u}_n}{\partial t} (t)dx,
\end{equation}
we integrate it over $[0,T]$, and use H\"{o}lder's inequality:
$$ \left\| \frac{\partial \o{u}_n} {\partial t}\right\|^2_{L^2([0,T],L^2(D)^3)} + \| \nabla \o{u}_n (T)\|^2_{L^2(D)^{3 \times 3}} - \| \nabla \o{u}_n (0)\|^2_{L^2(D)^{3 \times 3}} \leq C \left\| \frac{\partial \o{u}_n} {\partial t}\right\|_{L^2([0,T],L^2(D)^3)}, $$
which will imply that $\dfrac{\partial \o{u}_n} {\partial t} \in L^2([0,T], {\bf H})$ uniformly bounded. This gives from that $\Delta \o{u}_n \in L^2([0,T]; {\bf H} )$ which based on the regularity theorem from the stationary Stokes equation implies that $\o{u}_n \in L^2([0,T], H^2(D)^3\cap {\bf H})$ and informally bounded. We deduce from here that $\o{u} \in L^2([0,T]; {\bf H}) \cap L^2([0,T], H^2(D)^3)$ and based on Lemma 1.2 page 260 from \cite{temam} that $\o{u} \in C([0,T]; \bold{V})$.

\end{proof}
\end{theorem}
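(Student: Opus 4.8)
The plan is to repeat the Galerkin argument used for Theorem~\ref{thexun}, now with a genuinely nonlinear (but bounded, globally Lipschitz and of linear growth, since $\o{\alpha}\in C^1_b$) Brinkman term and no stochastic ingredients. First I would fix a basis $(e_k)_{k>0}$ of $\bold{V}$ (for instance the Stokes eigenfunctions), set $\bold{V}_n=\mathrm{span}\{e_1,\dots,e_n\}$, and solve \eqref{weaksoloun} with $\o{u}_n(0)=\Pi_n u_0$; since $z\mapsto\o{\alpha}(z)z$ is locally Lipschitz and of linear growth on $\R^3$, the ODE for the coefficients has a unique local solution by Cauchy--Lipschitz, made global by the a priori bound below. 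Testing \eqref{weaksoloun} with $\phi=\o{u}_n(t)$ and discarding the nonnegative term $\int_D\o{\alpha}(\o{u}_n)|\o{u}_n|^2\,dx$ (this is where positivity of $\o{\alpha}$ enters), a Gr\"{o}nwall argument gives $\o{u}_n$ bounded, uniformly in $n$, in $L^\infty(0,T;\bold{H})\cap L^2(0,T;\bold{V})$; boundedness of $\o{\alpha}$ then makes $\o{\alpha}(\o{u}_n)\o{u}_n$ bounded in $L^\infty(0,T;\bold{H})\hookrightarrow L^2(0,T;\bold{V}')$, so that $\partial_t\o{u}_n$ is bounded in $L^2(0,T;\bold{V}')$. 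By the Aubin--Lions compactness theorem (Theorem~2.1, p.~271 in \cite{temam}) a subsequence $\o{u}_{n'}$ converges weakly in $L^2(0,T;\bold{V})$, weakly-$*$ in $L^\infty(0,T;\bold{H})$, strongly in $L^2(0,T;\bold{H})$ and a.e.\ on $(0,T)\times D$ to some limit $\o{u}$. The only delicate point in passing to the limit is the nonlinear term: writing $\o{\alpha}(\o{u}_{n'})\o{u}_{n'}-\o{\alpha}(\o{u})\o{u}=\o{\alpha}(\o{u}_{n'})(\o{u}_{n'}-\o{u})+\bigl(\o{\alpha}(\o{u}_{n'})-\o{\alpha}(\o{u})\bigr)\o{u}$, the first summand goes to $0$ in $L^1((0,T)\times D)$ by boundedness of $\o{\alpha}$ and strong $L^2$ convergence, and the second by dominated convergence (a.e.\ convergence, boundedness of $\o{\alpha}$, $\o{u}\in L^2$); this is enough to obtain \eqref{weaksolou}. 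Finally $\o{u}\in L^2(0,T;\bold{V})$ with $\partial_t\o{u}\in L^2(0,T;\bold{V}')$ gives $\o{u}\in C([0,T];\bold{H})$ and attainment of the initial datum by Lemma~1.2, p.~260 in \cite{temam}.

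I expect uniqueness to be the main obstacle, since $z\mapsto\o{\alpha}(z)z$ is \emph{not} monotone, in contrast to the Brinkman--Forchheimer setting of \cite{varga}. Given two solutions $\o{u}_1,\o{u}_2$ in the above class, I would write the equation for $w=\o{u}_2-\o{u}_1$, test with $w$, and split the nonlinearity as $\bigl(\o{\alpha}(\o{u}_1)\o{u}_1-\o{\alpha}(\o{u}_2)\o{u}_2\bigr)\cdot w=-\o{\alpha}(\o{u}_1)|w|^2+\bigl(\o{\alpha}(\o{u}_1)-\o{\alpha}(\o{u}_2)\bigr)\o{u}_2\cdot w$; the first term is $\le 0$ by positivity, while the second is bounded by $C\int_D|\o{u}_2|\,|w|^2\,dx\le C\|\o{u}_2\|_{L^2}\|w\|_{L^4}^2$ by H\"{o}lder in $\R^3$. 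The Gagliardo--Nirenberg inequality $\|w\|_{L^4}^2\le C\|w\|_{L^2}^{1/2}\|\nabla w\|_{L^2}^{3/2}$ together with Young's inequality then absorbs $\tfrac12\|\nabla w\|_{L^2}^2$ into the dissipation and leaves $\tfrac{d}{dt}\|w\|_{L^2}^2\le C\|\o{u}_2(t)\|_{L^2}^4\,\|w\|_{L^2}^2$; since $\o{u}_2\in C([0,T];\bold{H})$ the coefficient lies in $L^1(0,T)$, so Gr\"{o}nwall with $w(0)=0$ forces $w\equiv 0$. I would emphasize that this works precisely because the solution class $L^\infty_t L^2_x\cap L^2_t H^1_x$ supplies exactly the integrability needed to control the cubic term in three space dimensions.

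For the improved regularity when $u_0\in\bold{V}$, I would test \eqref{weaksoloun} with $\phi=\partial_t\o{u}_n(t)$, integrate over $[0,T]$, and bound $\int_0^T\bigl(f-\o{\alpha}(\o{u}_n)\o{u}_n,\partial_t\o{u}_n\bigr)\,dt$ by $C\|\partial_t\o{u}_n\|_{L^2(0,T;\bold{H})}$ using $\|f\|_{L^2(0,T;L^2(D)^3)}$ and the already established bound on $\o{u}_n$ in $L^\infty(0,T;\bold{H})$; Young's inequality then yields $\partial_t\o{u}_n$ bounded in $L^2(0,T;\bold{H})$ and $\o{u}_n$ bounded in $L^\infty(0,T;\bold{V})$, both of which survive the passage to the limit. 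Rewriting \eqref{eqou} as $-\Delta\o{u}+\nabla\o{p}=f-\partial_t\o{u}-\o{\alpha}(\o{u})\o{u}\in L^2(0,T;\bold{H})$ and invoking the stationary Stokes regularity theorem from \cite{temam} gives $\o{u}\in L^2(0,T;H^2(D)^3)$, and then $\o{u}\in C([0,T];\bold{V})$ follows once more from Lemma~1.2, p.~260 in \cite{temam}. These final estimates should be routine once the uniqueness argument is in place.
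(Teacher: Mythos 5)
Your proposal is correct and follows essentially the same route as the paper: a Galerkin approximation with the energy estimate exploiting positivity of $\o{\alpha}$, Aubin--Lions compactness to pass to the limit, the identical splitting of the nonlinearity for uniqueness combined with the $L^4$ interpolation inequality, Young's inequality and Gr\"{o}nwall, and the same test with $\partial_t\o{u}_n$ plus stationary Stokes regularity for the improved estimates. If anything, your uniqueness step is slightly more careful than the paper's, since you retain the factor $\|\o{u}_2(s)\|_{L^2}$ when applying Young's inequality and correctly identify the resulting Gr\"{o}nwall coefficient as $\|\o{u}_2(t)\|_{L^2}^4$, which is integrable in time because $\o{u}_2\in C([0,T];\bold{H})$.
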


\subsection{Estimates for $\Psi^\e$}
\label{subs42}
We recall that according to \eqref{psie}
$$\Psi^\e (\eta,\xi) = \int_0^\infty e^{-c(\e) t} P_t^\xi \left[ \int_D \left(\alpha^\e(\cdot) - \o{\alpha^\e}(\xi)\right)\xi\phi dx \right] (\eta)dt,$$
defined for a fixed $\phi \in \bold{V} \cap L^\infty(D)^3$.
\begin{lemma}\label{psi}
\begin{equation}
\label{estpsi}
|\Psi^\e(\xi,\eta)|  \leq c \|\xi\|_{L^2} \|\phi\|_{L^\infty} \left(1+\|\eta\|_{L^2} +\|\xi\|_{L^2} \right).
\end{equation}
\end{lemma}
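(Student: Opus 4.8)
The plan is to read off $\Psi^\e$ from the representation \eqref{psie} and estimate $P_t^\xi$ applied to the fixed functional $H_\xi(\eta'):=\int_D\big(\alpha^\e(\eta')-\o{\alpha^\e}(\xi)\big)\xi\phi\,dx$, where $\phi\in\bold{V}\cap L^\infty(D)^3$ is the fixed test function, so that $\Psi^\e(\eta,\xi)=\int_0^\infty e^{-c(\e)t}(P_t^\xi H_\xi)(\eta)\,dt$. The first thing I would record is that $H_\xi$ has zero average against the invariant measure: by the definition \eqref{defoalphae} of $\o{\alpha^\e}$ and Fubini's theorem, $\int_{L^2(D)^3}H_\xi\,d\mu^\xi=0$. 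This is the essential point, because the naive estimate $|H_\xi|\le 2C|D|^{1/2}\|\phi\|_{L^\infty}\|\xi\|_{L^2}$ coming only from the uniform bound \eqref{alpha3} would yield $|\Psi^\e|\le c(\e)^{-1}(\cdots)$, which is useless once $c(\e)\to 0$; one must instead exploit the exponential ergodicity of $P_t^\xi$.

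To do so I would introduce an auxiliary random variable $\eta_2$ with law $\mu^\xi$, independent of $W$; then $v^{\xi,\eta_2}(t)$ has law $\mu^\xi$ by invariance, so $\E H_\xi(v^{\xi,\eta_2}(t))=0$ and hence $(P_t^\xi H_\xi)(\eta)=\E\big[H_\xi(v^{\xi,\eta}(t))-H_\xi(v^{\xi,\eta_2}(t))\big]$. The Lipschitz bound in the second variable contained in \eqref{alpha3}, together with Cauchy--Schwarz, gives $|H_\xi(\eta_1')-H_\xi(\eta_2')|\le C\|\phi\|_{L^\infty}\|\xi\|_{L^2}\|\eta_1'-\eta_2'\|_{L^2}$, and the pathwise contraction $v^{\xi,\eta}(t)-v^{\xi,\eta_2}(t)=(\eta-\eta_2)e^{-t}$ (this is exactly the content of \eqref{feller}, both copies being driven by the same $W$) then produces $\big|(P_t^\xi H_\xi)(\eta)\big|\le C\|\phi\|_{L^\infty}\|\xi\|_{L^2}\,e^{-t}\,\E\|\eta-\eta_2\|_{L^2}$. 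Since $\mu^\xi$ is Gaussian with mean $\xi$ and trace-class covariance, $\E\|\eta_2\|_{L^2}\le(\E\|\eta_2\|_{L^2}^2)^{1/2}\le\|\xi\|_{L^2}+C$, whence $\E\|\eta-\eta_2\|_{L^2}\le C(1+\|\eta\|_{L^2}+\|\xi\|_{L^2})$ and therefore $\big|(P_t^\xi H_\xi)(\eta)\big|\le C\|\phi\|_{L^\infty}\|\xi\|_{L^2}\,e^{-t}\big(1+\|\eta\|_{L^2}+\|\xi\|_{L^2}\big)$.

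Inserting this into \eqref{psie} and using $c(\e)>0$ to bound $\int_0^\infty e^{-(1+c(\e))t}\,dt\le 1$ yields exactly \eqref{estpsi}. I expect the only genuine obstacle to be the one already flagged: the crude $L^\infty$ bound on $\alpha^\e-\o{\alpha^\e}$ is not enough, since it makes the constant degenerate like $1/c(\e)$, so one must use the centering of $H_\xi$ with respect to $\mu^\xi$ and couple the fast motion started at $\eta$ with one started from stationarity; the price paid is the linear growth in $\|\eta\|_{L^2}$ and $\|\xi\|_{L^2}$, which is harmless in the later application because $\eta,\xi$ will be evaluated at $v^\e,u^\e$, which have uniformly bounded moments by Theorems \ref{thexun}--\ref{threg}. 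A minor routine point is the Borel measurability and $\mu^\xi$-integrability of $H_\xi$, which follows exactly as in the discussion preceding \eqref{defoalphae}.
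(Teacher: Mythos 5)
Your argument is correct and is essentially the paper's own proof: the paper likewise exploits the centering of the functional against $\mu^\xi$ by writing $P_t^\xi F^\e(\eta,\xi)-\int F^\e(w,\xi)\,d\mu^\xi(w)=\int\big[\E F^\e(v^{\xi,\eta}(t))-\E F^\e(v^{\xi,w}(t))\big]d\mu^\xi(w)$ (your independent stationary copy $\eta_2$ in integral form), then applies the Lipschitz bound and the pathwise contraction \eqref{feller}. The only cosmetic difference is that the paper bounds $\int\|w\|_{L^2}\,d\mu^\xi(w)\le c(1+\|\xi\|_{L^2})$ via invariance and Lemma \ref{l} rather than via the explicit Gaussian moments of $\mu^\xi$; both give the same estimate.
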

\begin{proof}
\begin{equation}
 |\Psi^\e(\xi,\eta)| \leq \int_0^t e^{-c(\e) t} \left|  P_t^\xi \left[\int_D \left(\alpha^\e(\cdot) - \o{\alpha^\e}(\xi)\right)\xi\phi dx\right] (\eta) \right|
\end{equation}
We denote by $F^\e(\eta,\xi) = \displaystyle\int_D \alpha^\e(\eta) \xi \phi dx$, which due to the Lipschitz condition for the function $\alpha$ is a Lipschitz function of $\eta$ with the constant
$c\|\xi\|_{L^2(D)^3} \|\phi\|_{L^\infty(D)^3}$ and Lipschitz with respect to $\xi$ with the constant $c \|\phi\|_{L^\infty(D)^3}$.

Using that the measure $\mu^\xi$ is invariant, we have that

\begin{equation}
\begin{split}
P_t^\xi F^\e(\eta,\xi) - \int_{L^2(D)^3} F^\e(w,\xi) d \mu^\xi(w) &= \int_{L^2(D)^3} \left[P_t^\xi F^\e(\eta,\xi) - P_t^\xi F^\e(w,\xi)\right] d\mu^\xi(w)\\
& = \int_{L^2(D)^3} \left[\E F^\e(v^{\xi,\eta}(t)) - \E F^\e(v^{\xi,w}(t))\right] d\mu^\xi(w)\\
& \leq c\|\xi\|_{L^2} \|\phi\|_{L^\infty} \int_{L^2(D)^3} \E \| (v^{\xi,\eta}(t)) - (v^{\xi,w}(t))\|_{L^2} d\mu^\xi(w)\\
 \intertext{ using \eqref{feller} }
& \leq c e^{-t}\|\xi\|_{L^2} \|\phi\|_{L^\infty} \int_{L^2(D)^3} \| (\eta-w)\|_{L^2} d\mu^\xi(w)\\
& \leq c e^{-t}\|\xi\|_{L^2} \|\phi\|_{L^\infty} \left(\|\eta\|_{L^2} +\int_{L^2(D)^3} \|w\|_{L^2}d\mu^\xi(w)  \right).
\end{split}
\end{equation}
Again using the invariance of $\mu^\xi$ and Lemma \ref{l}

\begin{equation}
\begin{split}
\int_{L^2(D)^3} \|w\|_{L^2}d\mu^\xi(w) &= \int_{L^2(D)^3} P_t^\xi \|w\|_{L^2}d\mu^\xi(w)\\
& = \int_{L^2(D)^3} \E    \|v^{\xi,w}(t)\|_{L^2}d\mu^\xi(w)\\
&\leq  \int_{L^2(D)^3} c(1 + \|\xi\|_{L^2}+ e^{-t}\|v^\xi(0,w)\|_{L^2})d\mu^\xi(w)\\
&\leq  c(1+\|\xi\|_{L^2})
\end{split}
\end{equation}
\end{proof}

Now, we estimate the Fr\^{e}chet partial derivatives of $\Psi^\e$ with respect to $\eta$ and $\xi$.
\begin{lemma}
\label{psi1}
\begin{equation}\label{estpsi1}
\left\| \dfrac{\partial \Psi^\e}{\partial \eta} (\eta,\xi) \right\|_{L^2(D)^3} \leq c\|\xi\|_{L^2(D)^3} \| \phi \|_{L^\infty(D)^3}.
\end{equation}
\end{lemma}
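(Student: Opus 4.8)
The plan is to differentiate the representation \eqref{psie} directly in $\eta$. Fix $\phi\in\bold{V}\cap L^\infty(D)^3$ and put $F^\e(w,\xi):=\int_D\bigl(\alpha^\e(w)-\o{\alpha^\e}(\xi)\bigr)\,\xi\cdot\phi\,dx$ for $w,\xi\in L^2(D)^3$, so that by \eqref{psie} and the definition of $P_t^\xi$,
\begin{equation*}
\Psi^\e(\eta,\xi)=\int_0^\infty e^{-c(\e)t}\,\bigl(P_t^\xi F^\e(\cdot,\xi)\bigr)(\eta)\,dt=\int_0^\infty e^{-c(\e)t}\,\E\,F^\e\bigl(v^{\xi,\eta}(t),\xi\bigr)\,dt .
\end{equation*}
By \eqref{alpha2}--\eqref{alpha3} the map $w\mapsto F^\e(w,\xi)$ is $C^1_b$ on $L^2(D)^3$ and its Fr\'echet derivative is represented by the $L^2(D)^3$-valued function $x\mapsto\nabla_z\alpha\bigl(\tfrac{x}{\e},w(x)\bigr)\,\bigl(\xi(x)\cdot\phi(x)\bigr)$, the centering term $\o{\alpha^\e}(\xi)$ being constant in $w$ and dropping out under $\partial_w$. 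Since $|\nabla_z\alpha(y,z)|\le C$ by \eqref{alpha3} and $|\xi\cdot\phi|\le|\xi|\,\|\phi\|_{L^\infty(D)^3}$ pointwise, this produces the key bound $\bigl\|\tfrac{\partial F^\e}{\partial w}(w,\xi)\bigr\|_{L^2(D)^3}\le C\,\|\xi\|_{L^2(D)^3}\,\|\phi\|_{L^\infty(D)^3}$, uniformly in $w$ and in $\e$.

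The next step is to use that, by \eqref{vxieta}, $v^{\xi,\eta}(t)=\eta\,e^{-t}+Z_t$ with $Z_t:=\xi(1-e^{-t})+\int_0^t e^{-(t-s)}\sqrt Q\,dW$ independent of $\eta$, so that $\partial_\eta v^{\xi,\eta}(t)=e^{-t}\,\mathrm{Id}$. Differentiating the display above under the integral and the expectation then gives
\begin{equation*}
\frac{\partial\Psi^\e}{\partial\eta}(\eta,\xi)=\int_0^\infty e^{-(c(\e)+1)t}\,\E\,\frac{\partial F^\e}{\partial w}\bigl(v^{\xi,\eta}(t),\xi\bigr)\,dt .
\end{equation*}
Inserting the key bound gives $\bigl\|\tfrac{\partial\Psi^\e}{\partial\eta}(\eta,\xi)\bigr\|_{L^2(D)^3}\le\frac{C}{c(\e)+1}\,\|\xi\|_{L^2(D)^3}\,\|\phi\|_{L^\infty(D)^3}$, which is $\le C\,\|\xi\|_{L^2(D)^3}\,\|\phi\|_{L^\infty(D)^3}$ uniformly in $\e$ because $c(\e)>0$; this is \eqref{estpsi1}. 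An equivalent, slightly more elementary route paralleling the proof of Lemma \ref{psi} is to note that $F^\e(\cdot,\xi)$ is Lipschitz with constant $C\|\xi\|_{L^2(D)^3}\|\phi\|_{L^\infty(D)^3}$, combine this with the identity $v^{\xi,\eta_1}(t)-v^{\xi,\eta_2}(t)=e^{-t}(\eta_1-\eta_2)$ and \eqref{feller} to deduce that $\Psi^\e(\cdot,\xi)$ is Lipschitz with the same constant, and then use that a $C^1$ function with a uniform Lipschitz constant has gradient bounded by it.

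I expect the only genuinely delicate point to be the interchange of the $\eta$-differentiation with $\int_0^\infty dt$ and with $\E$. The differentiation inside $\E$ is elementary from the affine dependence $v^{\xi,\eta}(t)=\eta e^{-t}+Z_t$, with $Z_t$ independent of $\eta$, together with $F^\e(\cdot,\xi)\in C^1_b$, by dominated convergence; the differentiation in $t$ is then legitimate because the resulting integrand has $L^2(D)^3$-norm dominated by $C e^{-(c(\e)+1)t}\|\xi\|_{L^2(D)^3}\|\phi\|_{L^\infty(D)^3}$, which is integrable on $(0,\infty)$. The structural feature that makes the final estimate uniform in $\e$ is precisely the extra factor $e^{-t}$ coming from $\partial_\eta v^{\xi,\eta}(t)=e^{-t}\,\mathrm{Id}$: it turns $e^{-c(\e)t}$ into $e^{-(c(\e)+1)t}$, keeping the $t$-integral bounded even though $c(\e)$ may tend to $0$; without it the estimate would degenerate. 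The $C^1_b$-regularity of $F^\e(\cdot,\xi)$ itself --- needed to place \eqref{eqpsie} within the framework of \eqref{eigen}, hence to speak of $\Psi^\e$ and $\tfrac{\partial\Psi^\e}{\partial\eta}$ at all via the results recalled in Section \ref{sec3} --- is furnished by \eqref{alpha2}--\eqref{alpha3}.
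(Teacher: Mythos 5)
Your argument is correct and follows essentially the same route as the paper: both exploit the affine dependence $v^{\xi,\eta+w}(t)=v^{\xi,\eta}(t)+e^{-t}w$ to differentiate $P_t^\xi F^\e(\cdot,\xi)$ in $\eta$, arrive at the representation $\frac{\partial\Psi^\e}{\partial\eta}(\eta,\xi)=\int_0^\infty e^{-(c(\e)+1)t}\,\E\bigl(\frac{\partial\alpha^\e}{\partial\eta}(v^{\xi,\eta}(t))\,\xi\cdot\phi\bigr)\,dt$ (the paper's \eqref{psi1''}), and conclude via the uniform bound \eqref{alpha3} on $\partial_z\alpha$ together with $\int_0^\infty e^{-(c(\e)+1)t}dt\le 1$. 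The paper merely spends more effort justifying the Fr\'echet differentiability (a contradiction argument with a pointwise-convergent subsequence and dominated convergence), which your dominated-convergence remark covers; your alternative Lipschitz-contraction route via \eqref{feller} is also valid and gives the same constant.
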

\begin{proof}
Let us show first that for every $t>0$,
\begin{equation}\label{psi1'}
\dfrac{\partial}{\partial \eta} P_t^\xi F^\e(\cdot,\xi) (\eta) = e^{-t} \E \left(\frac{ \partial \alpha^\e} {\partial \eta} \left(v^{\xi,\eta}(t))\right) \xi\cdot \phi\right),
\end{equation}
which means that
\begin{equation*}
\lim_{\|w\|_{L^2(D)^3} \to 0}  \dfrac {P_t^\xi F^\e(\cdot,\xi) (\eta+ w)  - P_t^\xi F^\e(\cdot,\xi) (\eta) -\displaystyle\int_De^{-t} \E \left( \frac{ \partial \alpha^\e} {\partial \eta} \left(v^{\xi,\eta}(t))\right) \xi\cdot \phi\right) w dx}{\|w\|_{L^2(D)^3}}.
\end{equation*}
We have:
\begin{equation*}
\begin{split}
&P_t^\xi F^\e(\cdot,\xi) (\eta+ w)  - P_t^\xi F^\e(\cdot,\xi) (\eta) -\displaystyle\int_De^{-t} \E \left( \frac{ \partial \alpha^\e} {\partial \eta} \left(v^{\xi,\eta}(t))\right) \xi\cdot \phi\right) w dx\\
=& \E \left( F^\e(v^{\xi,\eta + w}(t),\xi)-F^\e(v^{\xi,\eta}(t),\xi)\right)  -\displaystyle\int_De^{-t} \E \left( \frac{ \partial \alpha^\e} {\partial \eta} \left(v^{\xi,\eta}(t))\right) \xi\cdot \phi\right) w dx\\
=&\E \left( F^\e(v^{\xi,\eta + w}(t),\xi)-F^\e(v^{\xi,\eta}(t),\xi) - e^{-t}\displaystyle \int_D \dfrac{ \partial \alpha^\e} {\partial \eta} \left(v^{\xi,\eta}(t)\right)\cdot w \xi\cdot \phi dx\right)\\
=&\E \left(\displaystyle \int_D \left(\alpha^\e \left( v^{\xi,\eta + w}(t)\right) - \alpha^\e\left( v^{\xi,\eta}(t) \right)-\dfrac{ \partial \alpha^\e} {\partial \eta} \left(v^{\xi,\eta}(t)\right)\cdot e^{-t}w \right) \xi \cdot \phi dx \right).
\end{split}
\end{equation*}
But from \eqref{vxieta}, $v^{\xi,\eta + w}(t) = v^{\xi,\eta}(t) +e^{-t} w$, so
\begin{equation*}
\begin{split}
&P_t^\xi F^\e(\cdot,\xi) (\eta+ w)  - P_t^\xi F^\e(\cdot,\xi) (\eta) -e^{-t}\E\left(\displaystyle \int_D \dfrac{ \partial \alpha^\e} {\partial \eta} \left(v^{\xi,\eta}(t)\right)\cdot w \xi\cdot \phi dx\right)\\
=&\E \left(\displaystyle \int_D \left(\alpha^\e \left( v^{\xi,\eta}(t) + e^{-t} w\right) - \alpha^\e\left( v^{\xi,\eta}(t) \right)-\dfrac{ \partial \alpha^\e} {\partial \eta} \left(v^{\xi,\eta}(t) \right)\cdot e^{-t}w  \right) \xi \cdot \phi dx \right)
\end{split}
\end{equation*}
Assume that \eqref{psi1'} does not hold, so there exists a sequence $w_n\in L^2(D)^3$ with the norms converging to $0$, such that
\begin{equation*}\begin{split}
\lim_{n\to \infty} & \E  \displaystyle \int_D \dfrac{\left(\alpha^\e \left( v^{\xi,\eta}(t) + e^{-t} w_n\right) - \alpha^\e\left( v^{\xi,\eta}(t) \right)-\dfrac{ \partial \alpha^\e} {\partial \eta} \left(v^{\xi,\eta}(t) \right)\cdot e^{-t}w_n  \right) \xi\cdot \phi dx }{\| w_n\|_{L^2(D)^3}}  \neq 0.
\end{split}
\end{equation*}
We can assume w.r.g. that $w_n \to 0 $ pointwise in $D$. This implies, by the differentiability of the function $\alpha$ that 
$$\dfrac{\left(\alpha^\e \left(v^{\xi,\eta}(t) + e^{-t} w_n\right) - \alpha^\e\left(v^{\xi,\eta}(t) \right)-\dfrac{ \partial \alpha^\e} {\partial \eta} \left(v^{\xi,\eta}(t) \right)\cdot e^{-t}w_n  \right) \xi \cdot \phi}{\| w_n\|_{L^2(D)^3}}$$
converges to $0$ pointwise in $D$. We get a contradiction after applying  the dominated convergence theorem.

From \eqref{psi1'} we obtain by differentiating inside the integral that 
\begin{equation}\label{psi1''}
\dfrac{\partial \Psi^\e}{\partial \eta} (\eta,\xi) = \int_0^\infty e^{-c(\e)t-t} \E \left(\frac{ \partial \alpha^\e} {\partial \eta} \left(v^{\xi,\eta}(t))\right) \xi\cdot \phi\right) dt
\end{equation}
which implies \eqref{estpsi1}.

\end{proof}
\begin{lemma}
\label{psi2}
\begin{equation}\label{estpsi2}
\left\| \dfrac{\partial \Psi^\e}{\partial \xi} (\eta,\xi) \right\|_{L^2(D)^3} \leq  \dfrac{c}{c(\e)}\left(1+ \|\xi\|_{L^2(D)^3}\right) \| \phi \|_{L^\infty(D)^3}.
\end{equation}
\end{lemma}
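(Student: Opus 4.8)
The plan is to differentiate the representation \eqref{psie} directly in $\xi$ and to estimate the resulting terms. Fix a direction $h\in L^2(D)^3$ and write
$$\Psi^\e(\eta,\xi)=\int_0^\infty e^{-c(\e)t}\,\E\left[\int_D\bigl(\alpha^\e(v^{\xi,\eta}(t))-\o{\alpha^\e}(\xi)\bigr)\,\xi\cdot\phi\,dx\right]dt.$$
By \eqref{vxieta} one has $v^{\xi,\eta}(t)=\eta e^{-t}+\xi(1-e^{-t})+\int_0^t e^{-(t-s)}\sqrt Q\,dW(s)$, so $\xi$ enters $\Psi^\e$ in exactly three ways: affinely, through the argument $v^{\xi,\eta}(t)$ of $\alpha^\e$, with $\partial_\xi v^{\xi,\eta}(t)\cdot h=(1-e^{-t})h$; through the explicit factor $\xi$ multiplying $\phi$; and through $\o{\alpha^\e}(\xi)$. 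A preliminary step is to record that $\xi\mapsto\o{\alpha^\e}(\xi)$ is Fr\'{e}chet differentiable with a derivative bounded uniformly in $\e$ and $\xi$. Indeed, $\mu^\xi$ is Gaussian with mean $\xi$ and covariance $Q$, hence it is the push-forward of $\mu^0$ under the translation $w\mapsto w+\xi$, so by \eqref{defoalphae} we have $\o{\alpha^\e}(\xi)=\int_{L^2(D)^3}\alpha^\e(w+\xi)\,d\mu^0(w)$; differentiating under this integral, which is legitimate by \eqref{alpha2}--\eqref{alpha3}, gives $\partial_\xi\o{\alpha^\e}(\xi)\cdot h=\int_{L^2(D)^3}\frac{\partial\alpha^\e}{\partial\eta}(w+\xi)\cdot h\,d\mu^0(w)$, and \eqref{alpha3} yields the pointwise bound $\bigl|\bigl(\partial_\xi\o{\alpha^\e}(\xi)\cdot h\bigr)(x)\bigr|\le C\,|h(x)|$ for a.e.\ $x\in D$.

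Next I would compute $\partial_\xi\Psi^\e$ by differentiating under the time integral and the expectation. The interchange is justified exactly as in the proof of Lemma \ref{psi1}: one argues by contradiction along a sequence $h_n\to0$ in $L^2(D)^3$, which may be assumed to converge pointwise a.e.\ in $D$, uses the affine identity $v^{\xi+h,\eta}(t)=v^{\xi,\eta}(t)+(1-e^{-t})h$ together with the $C^1_b$-regularity of $\alpha$ to see that each difference quotient converges to $0$ pointwise, and concludes by dominated convergence, the dominating functions being furnished by \eqref{alpha3}; the same uniform bound permits differentiation under $\int_0^\infty e^{-c(\e)t}\,dt$. The outcome is that $\partial_\xi\Psi^\e(\eta,\xi)\cdot h$ equals the integral against $e^{-c(\e)t}\,dt$ over $(0,\infty)$ of the sum of the three terms
$$\E\!\int_D\frac{\partial\alpha^\e}{\partial\eta}(v^{\xi,\eta}(t))\cdot(1-e^{-t})h\ (\xi\cdot\phi)\,dx,\qquad \E\!\int_D\bigl(\alpha^\e(v^{\xi,\eta}(t))-\o{\alpha^\e}(\xi)\bigr)h\cdot\phi\,dx,\qquad -\int_D\bigl(\partial_\xi\o{\alpha^\e}(\xi)\cdot h\bigr)(\xi\cdot\phi)\,dx.$$

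It then remains to estimate these uniformly in $t$. The first and third terms are bounded, using \eqref{alpha3} and the preliminary bound on $\partial_\xi\o{\alpha^\e}$ together with $1-e^{-t}\le1$ and the Cauchy--Schwarz inequality, by $C\,\|\phi\|_{L^\infty}\int_D|h|\,|\xi|\,dx\le C\,\|\phi\|_{L^\infty}\|h\|_{L^2}\|\xi\|_{L^2}$; the second term is bounded, since $\alpha$ is bounded by \eqref{alpha3}, by $C\,\|\phi\|_{L^\infty}\int_D|h|\,dx\le C\,\|\phi\|_{L^\infty}\|h\|_{L^2}$. Hence each of the three terms is dominated, uniformly in $t\ge0$, by $c\,\|\phi\|_{L^\infty}\|h\|_{L^2}(1+\|\xi\|_{L^2})$, and since $\int_0^\infty e^{-c(\e)t}\,dt=1/c(\e)$ we obtain $\bigl|\partial_\xi\Psi^\e(\eta,\xi)\cdot h\bigr|\le\frac{c}{c(\e)}(1+\|\xi\|_{L^2})\,\|\phi\|_{L^\infty}\|h\|_{L^2}$ for all $h$, which is \eqref{estpsi2}. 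The one genuinely delicate point is the rigorous justification of the differentiation under the expectation and the infinite time integral — producing a single $t$-independent dominating function valid for all three difference quotients — together with the identification of $\partial_\xi\o{\alpha^\e}$ via the translation structure of the Gaussian invariant measures; the remaining estimates are routine applications of \eqref{alpha3} and the Cauchy--Schwarz inequality.
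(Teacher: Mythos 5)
Your proposal is correct and follows essentially the same route as the paper: both differentiate the representation \eqref{psie} in $\xi$ using the affine dependence $v^{\xi+z,\eta}(t)=v^{\xi,\eta}(t)+(1-e^{-t})z$, handle $\partial_\xi\o{\alpha^\e}$ via the translation structure $\mu^\xi(\cdot)=\mu^0(\cdot+\xi)$ of the Gaussian invariant measure, justify the interchange by the same pointwise/dominated-convergence contradiction argument as in Lemma \ref{psi1}, and extract the factor $1/c(\e)$ from $\int_0^\infty e^{-c(\e)t}\,dt$. The only cosmetic difference is that you group the resulting contributions into three terms rather than the paper's split into $\partial_\xi P_t^\xi F^\e$ and $\partial_\xi\int_D\o{\alpha^\e}(\xi)\xi\phi\,dx$, which changes nothing in the estimates.
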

\begin{proof}
Similarly as in the previous lemma we show that
\begin{equation}\label{psi2'}
\dfrac{\partial}{\partial \xi} P_t^\xi F^\e(\cdot,\xi) (\eta) =\E \left(\left(1-  e^{-t}\right) \frac{ \partial \alpha^\e} {\partial \eta} \left(v^{\xi,\eta}(t))\right) \xi\cdot \phi+ \alpha^\e \left(v^{\xi,\eta}(t)\right) \phi\right)
\end{equation}
and
\begin{equation}\label{psi2''}
\dfrac{\partial}{\partial \xi}  \int_{L^2(D)^3} F^\e(w,\xi) d \mu^\xi(w) = \int_{L^2(D)^3}\alpha^\e (w) \phi d \mu^\xi(w)  + \int_{L^2(D)^3} \dfrac{\partial \alpha^\e}{\partial \eta}\left(w \right) \phi \xi d \mu^\xi(w).
\end{equation}
We write:
\begin{equation*}\begin{split}
P_t^{\xi + z} &F^\e (\cdot, \xi +  z) - P_t^{\xi} F^\e (\cdot, \xi) = P_t^{\xi + z} F^\e (\cdot, \xi +  z) - P_t^{\xi} F^\e (\cdot, \xi + z) +P_t^{\xi} F^\e (\cdot, \xi +  z) - P_t^{\xi} F^\e (\cdot, \xi)\\
&= \E \left( F^\e(v^{\xi+z, \eta}(t), \xi+z) - F^\e(v^{\xi, \eta}(t), \xi+z)+ F^\e(v^{\xi, \eta}(t), \xi+z) - F^\e(v^{\xi, \eta}(t), \xi)    \right)\\
%&=  \E \left( F^\e(v^{\xi+z, \eta}(t), \xi+z) - F^\e(v^{\xi, \eta}(t), \xi+z)+ F^\e(v^{\xi, \eta}(t), \xi+z) - F^\e(v^{\xi, \eta}(t), %\xi) \right),
\end{split}\end{equation*}

We have:
\begin{equation*}
\begin{split}
&P_t^{\xi+z} F^\e(\cdot,\xi+z) - P_t^\xi F^\e(\cdot,\xi+z) -\displaystyle\int_D (1-e^{-t}) \E \left( \frac{ \partial \alpha^\e} {\partial \xi} \left(v^{\xi,\eta}(t))\right) \xi\cdot \phi\right) z dx\\
=& \E \left( F^\e(v^{\xi+z,\eta}(t),\xi+z)-F^\e(v^{\xi,\eta}(t),\xi+z)\right)  -\displaystyle\int_D(1-e^{-t} \E \left( \frac{ \partial \alpha^\e} {\partial \xi} \left(v^{\xi,\eta}(t))\right) \xi\cdot \phi\right) z dx\\
=&\E \left( F^\e(v^{\xi+z,\eta}(t),\xi+z)-F^\e(v^{\xi,\eta}(t),\xi+z) - (1-e^{-t})\displaystyle \int_D 
\dfrac{ \partial \alpha^\e} {\partial \xi} \left(v^{\xi,\eta}(t)\right)\cdot z \xi\cdot \phi dx\right)\\
=&\E \left(\displaystyle \int_D \left(\alpha^\e \left( v^{\xi+z,\eta}(t)\right) - \alpha^\e\left( v^{\xi,\eta}(t) \right)-\dfrac{ \partial \alpha^\e} {\partial \xi} \left(v^{\xi,\eta}(t)\right)\cdot (1-e^{-t})z \right) \xi \cdot \phi dx \right).
\end{split}
\end{equation*}
But from \eqref{vxieta}, $v^{\xi+z,\eta}(t) = v^{\xi,\eta}(t) +(1-e^{-t}) z$, so
\begin{equation*}
\begin{split}
&P_t^{\xi+z} F^\e(\cdot,\xi+z)- P_t^\xi F^\e(\cdot,\xi+z) -(1-e^{-t})\E\left(\displaystyle \int_D \dfrac{ \partial \alpha^\e} {\partial \xi} \left(v^{\xi,\eta}(t)\right)\cdot z \xi\cdot \phi dx\right)\\
=&\E \left(\displaystyle \int_D \left(\alpha^\e \left( v^{\xi,\eta}(t) + (1-e^{-t}) z\right) - \alpha^\e\left( v^{\xi,\eta}(t) \right)-\dfrac{ \partial \alpha^\e} {\partial \xi} \left(v^{\xi,\eta}(t) \right)\cdot (1-e^{-t})z  \right) \xi \cdot \phi dx \right).
\end{split}
\end{equation*}
We show that
\begin{equation}\label{a}\begin{split}
\lim_{z\to 0} & \E  \displaystyle \int_D \dfrac{\left(\alpha^\e \left( v^{\xi,\eta}(t) + (1-e^{-t}) z\right) 
- \alpha^\e\left( v^{\xi,\eta}(t) \right)-\dfrac{ \partial \alpha^\e} {\partial \xi} \left(v^{\xi,\eta}(t) \right)\cdot (1-e^{-t})z_n  \right) \xi\cdot \phi dx }{\| z\|_{L^2(D)^3}}  = 0.
\end{split}
\end{equation}

By contradiction assume that there exists a sequence $z_n\in L^2(D)^3$ with the norms converging to $0$, such that
\begin{equation*}\begin{split}
\lim_{n\to \infty} & \E  \displaystyle \int_D \dfrac{\left(\alpha^\e \left( v^{\xi,\eta}(t) + (1-e^{-t}) z_n\right) 
- \alpha^\e\left( v^{\xi,\eta}(t) \right)-\dfrac{ \partial \alpha^\e} {\partial \xi} \left(v^{\xi,\eta}(t) \right)\cdot (1-e^{-t})z_n  \right) \xi\cdot \phi dx }{\| z_n\|_{L^2(D)^3}}  \neq 0.
\end{split}
\end{equation*}
We can assume, by passing to a subsequence, that $z_n$ converges also pointwise to $0$.
But by the differentiability of the function $\alpha$, we have that
$$\dfrac{\left(\alpha^\e \left(v^{\xi,\eta}(t) + (1-e^{-t}) z_n\right) - \alpha^\e\left(v^{\xi,\eta}(t) \right)-\dfrac{ \partial \alpha^\e} {\partial \xi} \left(v^{\xi,\eta}(t) \right)\cdot (1-e^{-t})z_n  \right) \xi \cdot \phi}{\| z_n\|_{L^2(D)^3}}$$
converges to $0$ pointwise in $D$. We get a contradiction after applying  the dominated convergence theorem.

We use similar arguments and calculations to prove that

\begin{equation}\label{b}
\lim_{z\to 0} \displaystyle \dfrac{P_t^{\xi} F^\e (\cdot, \xi +  z) - P_t^{\xi} F^\e (\cdot, \xi)
-\displaystyle\int_{D}\E \left(\alpha^\e \left(v^{\xi,\eta}(t)\right) \phi\right)z dx}{\| z\|_{L^2(D)^3}} = 0,
\end{equation}
The limits \eqref{a} and \eqref{b} imply \eqref{psi2'}
which implies that
\begin{equation}
\label{estp2}
\left\| \dfrac{\partial}{\partial \xi} P_t^\xi F^\e(\cdot,\xi) (\eta)\right\|_{L^2(D)^3} \leq c\| \phi \|_{L^\infty(D)^3} (\|\xi\|_{L^2(D)^3} +1 ).
\end{equation}
\end{proof}
Now we compute $\displaystyle \dfrac {\partial} {\partial \xi}  \int_D \o{\alpha^\e}(\xi)\xi\phi dx = \displaystyle \dfrac {\partial} {\partial \xi}  \int_D \int_{L^2(D)^3}\alpha^\e(z)d\mu^\xi(z) \xi\phi dx $ and we show:
\begin{equation}\label{psi2''_2}
\dfrac{\partial}{\partial \xi} \int_D \o{\alpha^\e}(\xi)\xi\phi dx =\displaystyle \int_{L^2(D)^3} \dfrac { \partial \alpha^\e}{\partial \eta}(z)d\mu^\xi(z) \xi\phi  +  \int_{L^2(D)^3} \alpha^\e(z)d\mu^\xi(z) \phi.
\end{equation}
The measure $\mu^\xi$ having the mean $\xi$ and covariance $Q$ satisfies $\mu^\xi (B) =  \mu^0 (B+\xi)$ for every borel set $B \in L^2(D)^3$. So using the change of measure:
\begin{equation}
\begin{split}
\dfrac {\partial} {\partial \xi}  \int_D \int_{L^2(D)^3}\alpha^\e(z)d\mu^\xi(z) \xi\phi dx = \displaystyle \dfrac {\partial} {\partial \xi}  \int_D \int_{L^2(D)^3}\alpha^\e(z+\xi)d\mu^0(z) \xi\phi dx.
\end{split}
\end{equation}
Now \eqref{psi2''_2} follows using the same technique as for \eqref{a} and \eqref{b} and we also get:
\begin{equation}\label{estp3}
\left\|  \dfrac{\partial}{\partial \xi} \int_D \o{\alpha^\e}(\xi)\xi\phi dx \right\| \leq c\| \phi \|_{L^\infty(D)^3} (\|\xi\|_{L^2(D)^3} +1 ).
\end{equation}

We use the estimates \eqref{estp2} and \eqref{estp3} in the expression for $\Psi^\e$ and obtain \eqref{estpsi2}.

\subsection{Convergence of $S^\e_1$}
\label{subs43}
We start by applying It\^{o}'s formula to $\Psi^\e(v^\e(t), u^\e(t))$, for a fixed $\phi \in \bold{V} \cap L^\infty(D)^3$.
\begin{equation}
\begin{split}
\Psi^\e(v^\e(t), u^\e(t)) &=\Psi^\e(v^\e_0, u^\e_0)+ \int_0^t \dfrac{\partial \Psi^\e}{\partial \xi}(v^\e(s), u^\e(s))\cdot 
(\Delta u^\e(s) - \alpha^\e(v^\e(s))u^\e(s)+f(s))ds\\
&+ \int_0^t \dfrac{\partial \Psi^\e}{\partial \eta}(v^\e(s), u^\e(s))\cdot \dfrac{1}{\e}(- v^\e(s) + u^\e(s)) ds+ \dfrac{1}{\sqrt{\e}}\int_0^t \dfrac{\partial \Psi^\e}{\partial \eta}(v^\e(s), u^\e(s))\cdot \sqrt{Q} dW(s) \\
&+\dfrac{1}{2\e} \int_0^t Tr\left[Q \dfrac{\partial^2 \Psi^\e}{\partial v^2}(v^\e(s), u^\e(s))\right]ds\\
&=\Psi^\e(v^\e_0, u^\e_0)+ \int_0^t \dfrac{\partial \Psi^\e}{\partial \xi}(v^\e(s), u^\e(s))\cdot (\Delta u^\e(s) - \alpha^\e(v^\e(s))u^\e(s)+f(s))ds\\
&+\dfrac{1}{\e}\int_0^t \mathcal{L}^{u^\e(s)} \Psi^\e (v^\e(s), u^\e(s))ds\\
&+\dfrac{1}{\sqrt{\e}}\int_0^t \dfrac{\partial \Psi^\e}{\partial \eta}(v^\e(s), u^\e(s))\cdot \sqrt{Q} dW(s).
\end{split}
\end{equation}
We use the expression for the Kolmogorov operator $\mathcal{L}^\xi$ to get
\begin{equation}\nonumber
\begin{split}
\Psi^\e(v^\e(t), u^\e(t)) &= \Psi^\e(v^\e_0, u^\e_0)+ \int_0^t \dfrac{\partial \Psi^\e}{\partial \xi}(v^\e(s), u^\e(s))\cdot (\Delta u^\e(s) - \alpha^\e(v^\e(s))u^\e(s)+f(s))ds\\
&+\dfrac{c(\e)}{\e}\int_0^t  \Psi^\e (v^\e(s), u^\e(s))ds+\dfrac{1}{\e} \int_0^t \int_D \left(\alpha^\e(v^\e(s))ds - \o{\alpha^\e}(u^\e(s))\right)u^\e(s)\phi dx ds\\
&+\dfrac{1}{\sqrt{\e}}\int_0^t \dfrac{\partial \Psi^\e}{\partial \eta}(v^\e(s), u^\e(s))\cdot \sqrt{Q} dW(s)\Rightarrow
\end{split}
\end{equation}
\begin{equation}\label{int}
\begin{split}
\int_0^t \int_D &\left(\alpha^\e(v^\e(s)) - \o{\alpha^\e}(u^\e(s))\right)u^\e(s)\phi dx ds= \e \Psi^\e(v^\e(t), u^\e(t)) - \e \Psi^\e(v^\e_0, u^\e_0)  \\
&-\e \int_0^t \dfrac{\partial \Psi^\e}{\partial \xi}(v^\e(s), u^\e(s))\cdot (\Delta u^\e(s) - \alpha^\e(v^\e(s))u^\e(s)+f(s))ds \\
&-c(\e) \int_0^t  \Psi^\e (v^\e(s), u^\e(s))ds -\sqrt{\e}\int_0^t \dfrac{\partial \Psi^\e}{\partial \eta}(v^\e(s), u^\e(s))\cdot \sqrt{Q}dW(s).
\end{split}
\end{equation}
\begin{lemma}
\label{lemmaconv}
Let us define by $S^\e_1$ the integral $$\displaystyle\int_0^T  \int_D \left(\alpha^\e(v^\e(t)) - \o{\alpha^\e}(u^\e(t))\right)u^\e(t)\phi \psi'(t) dx dt.$$
Then for a particular choice of the sequence $c(\e)$ we have the following convergence:
\begin{equation}
\label{convre}
\begin{split}
\lim_{\e\to 0} \E\left| S^\e_1\right| =& 0.
\end{split}
\end{equation}
\end{lemma}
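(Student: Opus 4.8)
The plan is to use the It\^o identity \eqref{int}, which already expresses the running average $G^\e(t):=\int_0^t\int_D\bigl(\alpha^\e(v^\e(s))-\o{\alpha^\e}(u^\e(s))\bigr)u^\e(s)\phi\,dx\,ds$ as the sum of the ``boundary'' terms $\e\Psi^\e(v^\e(t),u^\e(t))$ and $-\e\Psi^\e(v^\e_0,u^\e_0)$, the drift term $-\e\int_0^t\frac{\partial\Psi^\e}{\partial\xi}(v^\e(s),u^\e(s))\cdot\bigl(\Delta u^\e(s)-\alpha^\e(v^\e(s))u^\e(s)+f(s)\bigr)\,ds$, the zeroth-order term $-c(\e)\int_0^t\Psi^\e(v^\e(s),u^\e(s))\,ds$, and the martingale term $-\sqrt\e\int_0^t\frac{\partial\Psi^\e}{\partial\eta}(v^\e(s),u^\e(s))\cdot\sqrt Q\,dW(s)$. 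Differentiating \eqref{int} in $t$ exhibits the integrand $h^\e(t):=\int_D(\alpha^\e(v^\e(t))-\o{\alpha^\e}(u^\e(t)))u^\e(t)\phi\,dx$ as an It\^o differential with the same structure; substituting this into $S^\e_1=\int_0^T h^\e(t)\psi'(t)\,dt$ and integrating by parts once in the $d\Psi^\e(v^\e(t),u^\e(t))$ piece (harmless, since $\psi$ may be chosen smooth and the functionals are continuous in $\psi$ by the uniform estimates) recasts $S^\e_1$ as a handful of terms, each carrying an explicit prefactor $\e$, $\sqrt\e$, $\e/c(\e)$ or $c(\e)$. It then suffices to bound the expectation of the absolute value of each term and to choose $c(\e)$ so that they all vanish as $\e\to0$.

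For the contributions that do not involve $\frac{\partial\Psi^\e}{\partial\xi}$ the estimates are immediate. By Lemma \ref{psi}, $|\Psi^\e(v^\e(t),u^\e(t))|\le c\,\|\phi\|_{L^\infty}\,\|u^\e(t)\|_{L^2}\bigl(1+\|v^\e(t)\|_{L^2}+\|u^\e(t)\|_{L^2}\bigr)$, so the uniform bound $\sup_\e\|u^\e\|_{L^\infty(\Omega;C([0,T];\bold{V}))}\le C_T$ from Theorem \ref{threg} together with $\sup_\e\E\sup_t\|v^\e(t)\|^2_{L^2}\le C_T$ from \eqref{est4} yields $\E|\e\,\Psi^\e(v^\e(t),u^\e(t))|\le C\e$, $\E|\e\,\Psi^\e(v^\e_0,u^\e_0)|\le C\e$, and, after integrating the bounded time weights, $\E\bigl|c(\e)\int_0^t\Psi^\e(v^\e(s),u^\e(s))\,ds\bigr|\le C\,c(\e)$. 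For the martingale term we use the It\^o isometry (or Burkholder--Davis--Gundy), Lemma \ref{psi1} -- which gives $\|\frac{\partial\Psi^\e}{\partial\eta}(v^\e(s),u^\e(s))\|_{L^2}\le c\,\|\phi\|_{L^\infty}\,\|u^\e(s)\|_{L^2}$, \emph{with no loss in} $c(\e)$ -- and the trace-class property of $Q$, obtaining a contribution of order $\sqrt\e$.

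The delicate term is $\e\int_0^t\frac{\partial\Psi^\e}{\partial\xi}(v^\e(s),u^\e(s))\cdot\bigl(\Delta u^\e(s)-\alpha^\e(v^\e(s))u^\e(s)+f(s)\bigr)\,ds$. Here Lemma \ref{psi2} only yields $\|\frac{\partial\Psi^\e}{\partial\xi}(v^\e(s),u^\e(s))\|_{L^2}\le\frac{c}{c(\e)}\,\|\phi\|_{L^\infty}\bigl(1+\|u^\e(s)\|_{L^2}\bigr)$, so a factor $1/c(\e)$ is unavoidably lost, and absorbing it requires genuine control of the drift of $u^\e$ in $L^2(0,T;L^2(D)^3)$. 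This is precisely what the improved regularity of Theorem \ref{threg} provides: $\|\Delta u^\e\|_{L^2(0,T;L^2(D)^3)}$ is controlled by $\sup_\e\|u^\e\|_{L^\infty(\Omega;L^2(0,T;H^2(D)^3))}\le C_T$, the term $\alpha^\e(v^\e)u^\e$ is bounded in $L^2(D)^3$ by $C\|u^\e\|_{L^2(D)^3}$ thanks to the uniform bound \eqref{alpha3}, and $f\in L^2(0,T;L^2(D)^3)$. A Cauchy--Schwarz inequality in $t$ then bounds this piece, in expectation, by $C\e/c(\e)$. Collecting everything,
\begin{equation*}
\E\bigl|S^\e_1\bigr|\le C\Bigl(\e+\sqrt\e+c(\e)+\frac{\e}{c(\e)}\Bigr),
\end{equation*}
and the choice $c(\e)=\sqrt\e$ makes the right-hand side $O(\sqrt\e)\to0$, which is \eqref{convre}.

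The main obstacle is exactly this tension: contractivity of $\Psi^\e$ (needed to kill the boundary, zeroth-order and martingale contributions) wants $c(\e)$ not too small, while the factor $\e/c(\e)$ on the drift wants $c(\e)$ not too large, and the drift term can be absorbed at all only because the $H^2$-estimate for $u^\e$ was paid for in advance in Theorem \ref{threg}. A secondary technical point is the rigorous application of It\^o's formula to $\Psi^\e$, which is \emph{not} in $C^2_b(L^2(D)^3)$: one argues through the explicit representation \eqref{psie}, the differentiability properties of $P_t^\xi$ established in Lemmas \ref{psi1}--\ref{psi2}, the regularity of strict solutions of \eqref{eigen}, and the a priori estimates of Theorems \ref{thexun}--\ref{threg} (with a localization argument if needed), in the spirit of \cite{CF2008}.
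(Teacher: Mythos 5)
Your proposal is correct and follows essentially the same route as the paper: the It\^o identity \eqref{int}, the estimates of Lemmas \ref{psi}, \ref{psi1} and \ref{psi2} together with the It\^o isometry and the $H^2$-bound \eqref{est1'} and the $v^\e$-bound \eqref{est4}, and the choice $c(\e)=\sqrt{\e}$. Your accounting is in fact slightly sharper on the martingale and zeroth-order terms (the paper carries a harmless extra factor $1/\sqrt{c(\e)}$ there), and you rightly flag the justification of It\^o's formula for $\Psi^\e\notin C^2_b$, which the paper leaves implicit.
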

\begin{proof}
From the equation \eqref{int} we get:
\begin{equation}
\begin{split}
& \left|\int_0^t  \int_D \left(\alpha^\e(v^\e(s)) - \o{\alpha^\e}(u^\e(s))\right)u^\e(s)\phi dx ds\right|  \\
\leq &\e\left| \Psi^\e(v^\e(t), u^\e(t))\right| +\e\left| \Psi^\e(v^\e_0, u^\e_0)\right| + \sqrt{\e} \left|  \int_0^t \dfrac{\partial \Psi^\e}{\partial \eta}(v^\e(s), u^\e(s))\cdot  \sqrt{Q} dW(s)\right|\\
 +&c(\e)\int_0^t  \left| \Psi^\e (v^\e(s), u^\e(s)) \right| ds\\
+& \e \int_0^t \left|\dfrac{\partial \Psi^\e}{\partial \xi}(v^\e(s), u^\e(s))\cdot (\Delta u^\e(s) - \alpha^\e(v^\e(s))u^\e(s)+f(s))\right| ds.
\end{split}
\end{equation}
We use for the function $\Psi^\e$ the estimates \eqref{estpsi}, \eqref{estpsi1} and \eqref{estpsi2} and get:
\begin{equation}
\begin{split}
 &\left|\int_0^t  \int_D \left(\alpha^\e(v^\e(s)) - \o{\alpha^\e}(u^\e(s))\right)u^\e(s)\phi dx ds\right| \leq\\
 & c \e\|u^\e(t)\|_{L^2(D)^3} \|\phi\|_{L^\infty(D)^3} \left(1+\|v^\e(t)\|_{L^2(D)^3}\right) + c \e\|u^\e_0\|_{L^2(D)^3} \|\phi\|_{L^\infty(D)^3} \left(1+\|v^\e_0\|_{L^2(D)^3}\right)ds\\
+& \sqrt{\e} \left|  \int_0^t \dfrac{\partial \Psi^\e}{\partial \eta}(v^\e(s), u^\e(s))\cdot \sqrt{Q} dW(s)\right|\\
+&c(\e)\int_0^t  c \e\|u^\e(s)\|_{L^2(D)^3} \|\phi\|_{L^\infty(D)^3} \left(1+\|v^\e(s)\|_{L^2(D)^3}\right) ds\\
+& \e   \int_0^t    \left\|\dfrac{\partial \Psi^\e}{\partial \xi}(v^\e(s), u^\e(s))\right\|_{L^2(D)^3} \cdot \left\|(\Delta u^\e(s) - \alpha^\e(v^\e(s))u^\e(s)+f(s)) \right\|_{L^2(D)^3}\\
\leq& c\e  \|\phi\|_{L^\infty(D)^3} \sup_{t\in[0,T]} \|u^\e(t)\|_{L^2(D)^3} (1+ \|v^\e(t)\|_{L^2(D)^3} +\|v^\e_0\|_{L^2(D)^3})\\
+ &\sqrt{\e} \left|  \int_0^t \dfrac{\partial \Psi^\e}{\partial \eta}(v^\e(s), u^\e(s))\cdot\sqrt{Q}dW(s)\right|\\
+&c \e c(\e) \|\phi\|_{L^\infty(D)^3} \sup_{t\in[0,T]} \|u^\e(t)\|_{L^2(D)^3} \int_0^t \left(1+\|v^\e(s)\|_{L^2(D)^3}\right) ds\\
+& \e   \int_0^t   \dfrac{c}{c(\e)}\left(1+ \|u^\e(s)\|_{L^2(D)^3}\right)\| \phi \|_{L^\infty(D)^3}\cdot \left(\left\|(u^\e(s)\right\|_{H^2(D)^3} +\left\|f(s)) \right\|_{L^2(D)^3}\right).\\
\end{split}
\end{equation}
But, It\^{o}'s isometry gives us that
\begin{equation}\nonumber
\begin{split}
\E \left|  \int_0^t \dfrac{\partial \Psi^\e}{\partial \eta}(v^\e(s), u^\e(s))\cdot \sqrt{Q}dW(s)\right| &\leq c \left(\int_0^t \left\| \dfrac{\partial \Psi^\e}{\partial \eta}(v^\e(s), u^\e(s)) \right\|^2_{L^2(D)^3}\right)^{1/2}\\ 
&\leq \dfrac{c}{\sqrt{c(\e)}}\| \phi \|_{L^\infty(D)^3} \left( \int_0^t\left(1+ \|u^\e(s)\|_{L^2(D)^3}\right)^2 ds \right)^{1/2},
\end{split}
\end{equation}
So, given the estimates for $u^\e$, we obtain that
\begin{equation}\nonumber
\begin{split}
\E \left| S^\e_1 \right| &\leq C_T \e \E (1+ \sup_{t\in[0,T]}\|v^\e(t)\|_{L^2(D)^3} ) +C_T \sqrt{\dfrac{\e}{c(\e)}} + C_T\dfrac{\e}{c(\e)}.
\end{split}
\end{equation}
Now using the estimate for $v^\e$ and choosing $c(\e) =\sqrt{\e}$ we get \eqref{convre}.
\end{proof}

\subsection{Convergence of $S^\e_3$}
\label{subs44}
\begin{lemma}
\label{lemmaconv'}
For fixed $\o{u}\in L^\infty (\Omega; C([0,T]; L^2(D)^3))$ and $\phi\in L^\infty(D)^3$ let us define by $S^\e_3$ the integral $\displaystyle \int_0^T \int_D \left(\o{\alpha^{\e}}(\o{u}(s))\o{u}(t) - \o{\alpha}(\o{u}(t))\o{u}(t)\right)\phi \psi (t)dxdt$. Then:
\begin{equation}
\label{convre}
\begin{split}
\lim_{\e\to 0} \E\left| S^\e_3\right| =& 0.
\end{split}
\end{equation}
\end{lemma}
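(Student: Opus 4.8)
The plan is to reduce the statement to the classical weak-convergence property of rapidly oscillating periodic functions, after interchanging the spatial integral with the integral against the invariant measure. Only the fact that $\mu^\xi$ is a probability measure is used; the Gaussian structure of $\mu^\xi$ plays no role in this lemma.

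First I would fix $(t,\omega)$ and set $\xi:=\o{u}(t)\in L^2(D)^3$ and $g:=\xi\cdot\phi$, which lies in $L^2(D)$ because $\phi\in L^\infty(D)^3$. Since $\alpha^\e$ is uniformly bounded by \eqref{alpha3} and (by the Pettis argument in Section 5) measurable from $L^2(D)^3$ into $L^2(D)$, both $\o{\alpha^\e}(\xi)=\int_{L^2(D)^3}\alpha^\e(\eta)\,d\mu^\xi(\eta)$ and $\o{\alpha}(\xi)=\int_{L^2(D)^3}\big(\int_Y\alpha(y,\eta)\,dy\big)d\mu^\xi(\eta)$ (where $\int_Y\alpha(y,\eta)\,dy$ denotes the function $x\mapsto\int_Y\alpha(y,\eta(x))\,dy$) are Bochner integrals in $L^2(D)$, and pairing them with $g\in L^2(D)$ commutes with the $\mu^\xi$-integral. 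Hence
\begin{equation*}
\int_D\big(\o{\alpha^\e}(\xi)-\o{\alpha}(\xi)\big)\,\xi\cdot\phi\,dx=\int_{L^2(D)^3}J^\e(\eta)\,d\mu^\xi(\eta),\qquad J^\e(w):=\int_D\Big(\alpha\big(\tfrac{x}{\e},w(x)\big)-\int_Y\alpha(y,w(x))\,dy\Big)\,\xi(x)\cdot\phi(x)\,dx.
\end{equation*}

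Second I would show that $J^\e(w)\to0$ as $\e\to0$ for every fixed $w\in L^2(D)^3$. This is exactly the assertion that $x\mapsto\alpha(x/\e,w(x))$ converges weakly-$\ast$ in $L^\infty(D)$ to its cell average $x\mapsto\int_Y\alpha(y,w(x))\,dy$, tested against $\xi\cdot\phi\in L^1(D)$, which is the standard property of admissible oscillating functions (Lemma 1.3 in \cite{A-2s}): by \eqref{alpha1} the map $\alpha(\cdot,\lambda)$ is $Y$-periodic and belongs to $L^2_\#(Y)$, and by \eqref{alpha2}--\eqref{alpha3} the map $\alpha(y,\cdot)$ is continuous uniformly in $y$, so $(x,y)\mapsto\alpha(y,w(x))$ is admissible; if one wishes to avoid invoking this for a merely $L^2$ function $w$, one first treats $w$ a simple function — where $\alpha(y,w(x))$ is a finite combination of $L^2_\#(Y)$-functions of $y$ times functions of $x$ — and then passes to general $w$ via the uniform bound and the uniform Lipschitz continuity of $\alpha$ from \eqref{alpha3}. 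Since $|J^\e(w)|\le C\,\|\xi\|_{L^2(D)^3}\,\|\phi\|_{L^\infty(D)^3}$ uniformly in $\e$ and $w$, and $\mu^\xi$ is a probability measure, the dominated convergence theorem yields
\begin{equation*}
\lim_{\e\to0}\int_D\big(\o{\alpha^\e}(\o{u}(t))-\o{\alpha}(\o{u}(t))\big)\,\o{u}(t)\cdot\phi\,dx=0\qquad\text{for a.e. }(t,\omega).
\end{equation*}

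Finally I would pass this through the time integral and the expectation by dominated convergence once more: for a.e. $\omega$ one has $\o{u}(\cdot,\omega)\in C([0,T];L^2(D)^3)$, so the previous limit holds for every $t\in[0,T]$, while the $t$-integrand of $S^\e_3$ is bounded by $C\,\|\phi\|_{L^\infty(D)^3}\,\|\psi\|_{C[0,T]}\,\|\o{u}(\cdot,\omega)\|_{C([0,T];L^2(D)^3)}$, which is integrable on $[0,T]$; hence $S^\e_3\to0$ a.s., and since $|S^\e_3|$ is dominated by $CT\,\|\phi\|_{L^\infty(D)^3}\,\|\psi\|_{C[0,T]}\,\|\o{u}(\cdot,\omega)\|_{C([0,T];L^2(D)^3)}\in L^\infty(\Omega)$ (using $\o{u}\in L^\infty(\Omega;C([0,T];L^2(D)^3))$), a last application of dominated convergence gives $\E|S^\e_3|\to0$. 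The only genuinely delicate point is the admissibility of $\alpha(y,w(x))$ in the periodic-averaging step; once that is granted, the $\mu^\xi$-integral and the two outer integrals are handled purely by uniform bounds and dominated convergence.
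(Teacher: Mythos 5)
Your proof is correct and follows essentially the same route as the paper: both arguments reduce the claim to the periodic oscillation lemma (Lemma 1.3 in \cite{A-2s}) via a density argument that exploits the uniform Lipschitz bound \eqref{alpha3}, and then integrate successively over $\mu^{\xi}$, $[0,T]$ and $\Omega$ by dominated (Vitali) convergence using the uniform boundedness of $\alpha$. The only difference is one of ordering: you interchange $\int_D$ with the $\mu^{\xi}$-integral first and therefore need only the weak-$\ast$ convergence of $x\mapsto\alpha(x/\e,w(x))$ tested against the fixed function $\o{u}(t)\cdot\phi$, whereas the paper asserts strong $L^2(D)^3$ convergence of $F^\e_t(z)$ to $0$ --- a claim stronger than what the oscillation lemma actually yields, but one that is never needed since the quantity is only ever paired with $\phi\psi$.
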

\begin{proof}
For any $t\in[0,T]$ consider the sequence of functions $F^\e_t: L^2(D)^3 \to L^2(D)^3$, 
$$F^\e_t(z) (x) = \left(\alpha \left(\dfrac{x}{\e}, z(x)\right)  -  \int_Y\alpha \left(y, z(x)\right) \right) u(t,x).$$
We show now that for any $z\in L^2(D)^3$, for every $t\in[0,T]$ and a.e. $\omega\in\Omega$, $F^\e_t (z)$ converges in $L^2(D)^3$ to $0$. We we fix $\omega$ and $t$ and let $z_n$ and $w_n$ two sequences of continuous functions converging in $L^2(D)^3$ to $z$ and $u(t)$. We use Lemma 1.3 from \cite{A-2s} and obtain that the sequence $ F^\e_n(x) = \left(\alpha \left(\dfrac{x}{\e}, z_n(x)\right)  -  \displaystyle\int_Y\alpha \left(y, z_n(x)\right) \right) w_n(x)$ converges when $\e\to 0$ to $0$ in $L^2(D)^3$.

But $$\left|F^\e_n(x) - F^\e_t(z)(x) \right| \leq c |w_n(x)- u(t,x)| + c|z_n(x)-z(x)|,$$
based on the Lipschitz condition and boundedness for $\alpha$. 
We deduce that $F^\e_t (z)$ converges in $L^2(D)^3$ to $0$. The sequence being also uniformly bounded by $\|\o{u} \|_{L^\infty (\Omega; C([0,T]; L^2(D)^3))}$, Vitali's convergence theorem implies that the sequence of the integrals with respect to the probability measure on $L^2(D)^3$, $\mu^{\o{u}(t)}$ also converge to $0$ in $L^2(D)^3$:
$$\lim_{\e\to 0}\int_{L^2(D)^3} F^\e_t (z) d\mu^{\o{u}(t)} dz = 0 \ in \  L^2(D)^3,$$
which can be rewritten as
$$\lim_{\e\to 0} \o{\alpha^\e}(\o{u}(t)) \o{u}(t) - \o{\alpha}(\o{u}(t)) \o{u}(t) = 0  \ in \  L^2(D)^3.$$
This implies that $\mathbb{P}$ a.s. and for every $t\in[0,T]$
$$\lim_{\e\to 0} \int_D \left(\o{\alpha^\e}(\o{u}(t)) \o{u}(t)- \o{\alpha}(\o{u}(t)) \o{u}(t)\right)\phi \psi' (t)dx = 0,$$
with the sequence being also uniformly bounded by $c \|\o{u}\|_{L^\infty(\Omega, C([0,T];L^2(D)^3))} \| \phi\|_{L^\infty(D)^3} \| \psi '\|_{L^\infty[0,T]}$.  We apply the bounded convergence theorem and integrate over $\Omega \times [0,T]$ to get the result.
\end{proof}
\subsection{Proof of Theorem \ref{thconv}}
\label{subs45}
\begin{proof}
The uniform bounds \eqref{est1'}, \eqref{est2'} and \eqref{est3'} hold for $u^\e$. Then, for a.e. $\omega \in \Omega$ the sequence is included in a compact set $\mathcal{K}$ of $L^2(0,T;\bold{V})$ and  the sequence is tight. Then, there exists a subsequence $u^{\e'}$ and a random element $\o{u} \in L^2(0,T;\bold{V})$ such that $u^{\e'}$ converges in distribution to $\o{u}$. Skorokhod theorem gives us the existence of another subsequence $u^{\e''}$ and a sequence $\wt{u^{\e''}}$ with the same distribution defined on another probability space $\wt{\Omega}$ that converges pointwise to some $\wt{\o{u}}$, a random element of $L^2(0,T;\bold{V})$ with the same distribution as $\o{u}$. It follows from here that $\wt{\o{u}}$ belongs to $\mathcal{K}$ a.s. so $\wt{\o{u}}\in L^\infty(\wt{\Omega}, L^2(0,T;\bold{V}))$ and $\o{u}\in L^\infty(\Omega, L^2(0,T;\bold{V}))$.
We use the variational formulation \eqref{weaksole} for $u^{\e''}$ with a test function $\phi \in \bold{V}\cap L^\infty(D)^3$, multiply it with $\psi'$, where $\psi \in C^1[0,T]$ with $\psi(T)=0$ to get:
\begin{equation}
\label{weaksole'}
\begin{split}
&\int_0^T \int_D u^{\e''}(t) \phi \psi'(t) dx dt - \int_0^T\int_D u^{\e''}_0 \phi \psi'(t) dx dt - \int_0^T \int_D \nabla u^{\e''} (t) \nabla \phi \psi(t)dx dt\\
 -&\int_0^T \int_D \alpha^{\e''}(v^{\e''}(t)) u^{\e''}(t) \phi \psi(t)dx dt = -\int_0^T \int_D f(t) \phi \psi(t) dx dt.
\end{split}
\end{equation}
We now show that
\begin{equation}
\label{convre'}
\lim_{\e''\to 0} \E\left|\displaystyle\int_0^T  \int_D \left(\alpha^{\e''}(v^{\e''}(t))u^{\e''}(t) - \o{\alpha}(\o{u}(t))\o{u}(t)\right)\phi \psi(t)dx dt \right| = 0.
\end{equation}
We write:
\begin{equation}\nonumber
\begin{split}
\alpha^{\e''}(v^{\e''})u^{\e''} - \o{\alpha}(\o{u})\o{u} &= \alpha^{\e''}(v^{\e''})u^{\e''} - \o{\alpha^{\e''}}(u^{\e''})u^{\e''} + \o{\alpha^{\e''}}(u^{\e''})u^{\e''}-\o{\alpha^{\e''}}(\o{u})u^{\e''}\\
&+\o{\alpha^{\e''}}(\o{u})u^{\e''} -\o{\alpha^{\e''}}(\o{u})\o{u}+ \o{\alpha^{\e''}}(\o{u})\o{u} -\o{\alpha}(\o{u})\o{u},
\end{split}
\end{equation}
so
\begin{equation}\nonumber
\begin{split}
\int_0^T \int_D \alpha^{\e''}(v^{\e''}(t))u^{\e''}(t)\phi \psi(t) dx dt = S^{\e''}_1 + S^{\e''}_2  +S^{\e''}_3,
\end{split}
\end{equation}
where
\begin{equation}
\label{s1}
S^\e_1  = \int_0^T  \int_D \left(\alpha^\e(v^\e(t)) - \o{\alpha^\e}(u^\e(t))\right)u^\e(t)\phi \psi(t) dxdt,
\end{equation}
\begin{equation}
\label{s2}
S^\e_2  = \int_0^T \int_D \left( \o{\alpha^\e}(u^\e(t))u^\e(t) - \o{\alpha^{\e}}(\o{u}(t))\o{u}(t) \right)\phi \psi(t)dx dt,
\end{equation}
and
\begin{equation}
\label{s3}
S^\e_3  = \int_0^T  \int_D \left(\o{\alpha^{\e}}(\o{u}(t))\o{u}(t) - \o{\alpha}(\o{u}(t))\o{u}(t)\right)\phi \psi (t)dxdt.
\end{equation}
Given the same distribution for the sequences $u^\e$ and $\wt{u^\e}$, for $\o{u}$ and $\wt{\o{u}}$ and the uniform bounds in $L^2(0,T;\bold{V})$ we have:
\begin{equation*}
\begin{split}
\E \left| S^{\e''}_2\right| &= \E \left| \int_0^T \int_D \left( \o{\alpha^{\e''}}(\wt{u^{\e''}}(t))\wt{u^{\e''}}(t) - \o{\alpha^{\e''}}(\wt{\o{u}}(t))\wt{\o{u}}(t) \right)\phi \psi(t)dx dt\right|\\
&\leq \E \left| \int_0^T \int_D \left( \o{\alpha^{\e''}}(\wt{u^{\e''}}(t))\wt{u^{\e''}}(t) - \o{\alpha^{\e''}}(\wt{u^{\e''}}(t))\wt{\o{u}}(t) \right)\phi \psi(t)dx dt\right|\\
&+  \E \left| \int_0^T \int_D \left( \o{\alpha^{\e''}}(\wt{u^{\e''}}(t))\wt{\o{u}}(t) - \o{\alpha^{\e''}}(\wt{\o{u}}(t))\wt{\o{u}}(t) \right)\phi \psi(t)dx dt\right|.
\end{split}
\end{equation*}
The function $\o{\alpha^{\e}}$ is Lipschitz, with the same constant as $\alpha$ and also bounded so we get:
\begin{equation}\nonumber
\begin{split}
\E \left| S^{\e''}_2\right| \leq c\|\phi\|_{L^\infty(D)^3} \| \psi \|_{L^\infty[0,T]} \E \left\| \wt{u^{\e''}} - \wt{\o{u}}\right\|_{L^1(0,T;\bold{V})}.
\end{split}
\end{equation}
The pointwise convergence of $\wt{u^{\e''}}$ to $\wt{\o{u}}$ in $L^2(0,T;\bold{V})$ and the uniform bounds imply that 
\begin{equation}\nonumber
\lim_{\e''\to 0} \E \left| S^{\e''}_2\right| = 0.
\end{equation}
This, together with the limits given by the Lemmas \ref{lemmaconv} and \ref{lemmaconv'} give \eqref{convre'}. \eqref{convre'} and \eqref{weaksole'} imply that
\begin{equation}\nonumber
\E \left|  \int_0^T \int_D \left(u^{\e''}(t)\phi\psi'(t) - u_0^{\e''} \phi\psi' (t) -\nabla u^{\e''}(t) \nabla \phi \psi(t) + f(t)\phi\psi(t) - \o{\alpha} (\o{u}(t)) \o{u}(t) \phi\psi(t)\right)dx dt \right | \to 0.
\end{equation}
Now, using the fact that $u^\e$ and $\wt{u^\e}$ are equally distributed implies that

\begin{equation}\nonumber
\E \left|  \int_0^T \int_D \left(\wt{u^{\e''}}(t)\phi\psi'(t) - u_0^{\e''} \phi\psi' (t) -\nabla \wt{u^{\e''}}(t) \nabla \phi \psi(t) + f(t)\phi\psi(t) - \o{\alpha} (\wt{\o{u}}(t)) \wt{\o{u}}(t) \phi\psi(t)\right)dxdt \right | \to 0.
\end{equation}
The sequence above converges in $L^1(\wt{\Omega})$ to $0$ but also pointwise in $\wt{\Omega}$ to 
$$\int_0^T \int_D \left(\wt{\o{u}}(t)\phi\psi'(t) - u_0 \phi\psi' (t) -\nabla \wt{\o{u}}(t) \nabla \phi \psi(t) + f(t)\phi\psi(t) - \o{\alpha} (\wt{\o{u}}(t)) \wt{\o{u}}(t) \phi\psi(t)\right)dxdt, $$
which means that $\wt{\o{u}}$ is pointwise the weak solution of the deterministic equation \eqref{eqou} which, according to Theorem \ref{thexunou} has a unique solution, so $\wt{\o{u}}$ and $\o{u}$ are deterministic. Then, the whole sequence $u^\e$ converges to $\o{u}$ in distribution, and since $\o{u}$ is deterministic then the convergence is also in probability  see \cite{JP} Theorem 18.3.
\end{proof}

\section*{Acknowledgements}
%This project started when Hakima Bessaih 
%was visiting the Numerical Porous Media (NumPor) SRI Center at KAUST.
%She would like to thank the  NumPor for the
%financial support,  the  excellent working atmosphere and the warm hospitality. 
 Hakima Bessaih is  supported by the NSF grants DMS-1416689 and DMS-1418838.
Yalchin Efendiev's work is partially supported by the U.S. Department of Energy Office of Science, Office of Advanced Scientific Computing Research, Applied Mathematics program under Award Number DE-FG02-13ER26165 and  the DoD Army ARO Project.

%{\color{red}
%\begin{remark}
%Our system \eqref{system} can be generalized to contain a more general term of the form $g(u^\e)$ instead of 
%$u^\e$ in the particle equation $\eqref{system}_{3}$, where $g\in C^{1}_{b}( \R^3)$.
%\end{remark}
%}


\begin{thebibliography}{99}

\bibitem{A-2s}
G. Allaire, {\it Homogenization and two-scale convergence}, SIAM J. Math. Anal., {\bf 23}(6),
(1992), pp. 1482--1518.

%\bibitem{A92}
%G. Allaire, {\it Homogenization of the unsteady Stokes equations in porous media}, Pitman Research %Notes in Mathematics Series (1992), pp. 109--109.

\bibitem{N89}
G. Nguetseng, {\it A general convergence result for a functional related to the theory of homogenization.}, SIAM Journal on Mathematical Analysis 20.3 (1989), pp 608--623.

\bibitem{N03}
G. Nguetseng, {\it Homogenization structures and applications I.}, Zeitschrift fur Analysis und ihre Andwendungen 22.1 (2003), pp. 73--108.

%\bibitem{N04}
%G. Nguetseng, {\it Homogenization structures and applications II.}, Zeitschrift fur Analysis und ihre %Andwendungen 23 (2004), pp. 483--508.

%\bibitem{BMW94}
%A. Bourgeat, A. A. Mikeli{\'c}, and S. Wright, {\it Stochastic two-scale convergence in the mean and %applications.}, reine angew. Math 456.1 (1994), pp. 19--51.

%\bibitem{T80} L. Tartar, {\it Incompressible fluid flow in a porous medium-convergence of the %homogenization process.} Appendix of  Non-homogeneous media and vibration theory.

%\bibitem{CDE96} ∂
%D. Cioranescu, P. Donato and H. I. Ene, {\it Homogenization of the Stokes problem with %nonhomogeneous slip boundary conditions}, Math. Methods in Appl. Sci., {\bf 19}
%(11), (1996), pp. 857--881.

%\bibitem{FG95}
%F. Flandoli, D. Gatarek (1995), {\it Martingale and stationary solutions
%for stochastic Navier-Stokes equations}, Probab. Theory relat. Fields,
%{\bf 102}, 367-391.

%\bibitem{NSW10}
%G. Nguetseng, M. Sango, and J. L. Woukeng, {\it Reiterated Ergodic Algebras and Applications}, %Communications in Mathematical Physics, {\bf 300} (3), (2010), pp. 835--876.

%\bibitem{JL13}
%J. L. Woukeng, {\it Homogenization in algebras with mean value.}, arXiv:1207.5397v2 , pp. 51.

%\bibitem{RSW12}
%P. A. Razafimandimby, M. Sango, and J. L. Woukeng, {\it Homogenization of a stochastic nonlinear %reaction-diffusion equation with a large reaction term: The almost periodic framework.}  Journal of %Mathematical Analysis and Applications 394.1 (2012), pp. 186--212.

%\bibitem{Mik91}
%A. Mikeli{\'c}, {\it Homogenization of nonstationary Navier-Stokes equations in a domain with a grained %boundary.} Annali di Matematica pura ed applicata 158.1 (1991), pp. 167--179.

%\bibitem{Mik95}
%A. Mikeli{\'c}, {\it Effets inertiels pour un {\'e}coulement stationnaire visqueux incompressible dans un %milieu poreux.} Comptes rendus de l'Académie des sciences. Série 1, Math{\'e}matique 320.10 %(1995), pp. 1289--1294.

\bibitem{A91}
G. Allaire, {\it Homogenization of the Navier-Stokes Equations with a Slip Boundary Condition.} Communications on pure and applied mathematics 44.6 (1991), pp. 605--641.

\bibitem{BLL06}
X. Blanc, C. Le Bris, P-L. Lions, {\it Du discret au continu pour des mod{\`e}les de r{\'e}seaux al{\'e}atoires d'atomes.} Comptes Rendus Mathematique 342.8 (2006), pp. 627--633.

\bibitem{Cerrai}
S. Cerrai, Second order PDE's in finite and infinite dimension. 
A probabilistic approach., {\it Lecture Notes in Mathematics.}, 1762. Springer-Verlag, Berlin, (2001).

\bibitem{Cerrai2009}
S. Cerrai, {\it A Khasminskii type averaging principle for stochastic reaction-diffusion equations},  
Ann. Appl. Probab., {\bf 19}  (2009), no. 3, 899--948.

\bibitem{CF2008}
S. Cerrai, M. Freidlin, {\it Averaging principle for a class of stochastic reaction-diffusion equations}, 
 Probab. Theory Related Fields ., {\bf 144}  (2009), no. 1-2, 137--177.
 

\bibitem {DPZ}
G. Da Prato, J. Zabczyk:
\textit{Stochastic Equations in Infinite Dimensions},
Cambridge University Press, Cambridge (1992).

\bibitem {DPZ2}
G. Da Prato, J. Zabczyk:
\textit{Ergodicity for infinite-dimensional systems},
London Mathematical Society Lecture Note Series, 229.
Cambridge University Press, Cambridge (1996).



\bibitem{gk10}
 M. Griebel, M. Klitz, {\it Homogenization and numerical simulation of flow in geometries with textile microstructures}, SIAM Multiscale Model. Simul. 8
(4) (2010) 1439.

\bibitem{FW}
M. Freidlin, A. Wentzell, {\it Random Perturbations of Dynamical Systems}, (second ed.)Springer-Verlag, New York (1998).

\bibitem{FW1}
M. Freidlin, A. Wentzell, {\it Averaging principle for stochastic perturbations of multifrequency systems}, 
Stochastics and Dynamics, {\bf 3} (2003), 393--408.

\bibitem{ikls11}
O. Iliev, R. Kirsch, Z. Lakdawala, V. Starikovicius, {\it On some macroscopic models for depth filtration: analytical solutions and parameter identification}, in: Proceedings of Filtech Europa, 2011.

\bibitem{galina}
O. Iliev, Z. Lakdawa, G. Printsypar, {\it On a multiscale approach for filter efficiency simulations},
Comput. Math. Appl., {\bf 67}, no. 12 (2014), 2171--2184.


\bibitem{il04}
 O. Iliev, V. Laptev, {\it On numerical simulation of flow through oil filters}, Comput. Vis. Sci. 6 (2004) 139-146.

\bibitem{JP}
J. Jacod, P. Protter, {\it Probability Essentials}, Universitext, Springer-Verlag, Berlin (2000).

\bibitem{varga}
S. V. Kalantarov, E. A. Zelik, {\it Smooth attractors for the Brinkman-Forchheimer equations with fast growing nonlinearities}, Commun. Pure Appl. Anal. {\bf 11}, no. 5 (2012), 2037--2054.

\bibitem{saber}
P. A. Markowich, E. S. Titi, S. Trabelsi, {\it Continuous data assimilation for the three-dimensional Brinkman-Forchheimer-extended Darcy Model}, Preprint.

\bibitem{MSV}
B. Maslowski, J. Seidler, I. Vrkoc, {\it An averaging principle for stochastic evolution equations. II}, Math. Bohem., {\bf 116} , no. 2 (1991), 
191--224.

\bibitem{SV}
J. Seidler, I. Vrkoc, {\it An averaging principle for stochastic evolution equations}, 
Casopis Pest. Mat., {\bf 115}, no. 3 (1990), 240Ð263.

\bibitem {pazy} A. Pazy: \textit{Semigroups of Linear Operators and
Applications to Partial Differential Equations}, Applied
Mathematical Sciences, {\bf 44}, Springer-Verlag, New York (1983).

\bibitem {temam} R. Temam, {\it  Navier-Stokes equations. Theory and numerical analysis.
Studies in Mathematics and its Applications 2}, North-Holland Publishing Co., Amsterdam-New York (1979).

\bibitem {sohr} H. Sohr, {\it  The Navier-Stokes equations: an elementary functional analytic approach. Vol. 5.}, Basel: Birkh{\"a}user, (2001).

\bibitem {WCD07} W. Wang, D. Cao, and J. Duan, {\it  Effective macroscopic dynamics of stochastic partial differential equations in perforated domains}, SIAM Journal on Mathematical Analysis, {\bf 38} (5) (2007), pp. 1508--1527.

%\bibitem {WD07} W. Wang and J. Duan, {\it  Homogenized dynamics of stochastic partial differential %equations with dynamical boundary conditions}, Communications in Mathematical Physics, {\bf 275} (1), %(2007), pp. 163--186.

%\bibitem {YD07} D. Yang and J. Duan, {\it  An impact of stochastic dynamic boundary conditions on the %evolution of the Cahn-Hilliard system.}, Stoch. Anal. and Appl., {\bf 25}, (2007), pp. 613--639.

%\bibitem {WD09} W. Wang and J. Duan, {\it  Reductions and deviations for stochastic partial differential %equations under fast dynamical boundary conditions}, Stoch. Anal. and Appl., {\bf 27} (3), (2009), pp. %431--459.

%\bibitem {E93} J. Escher, {\it  Quasilinear parabolic systems with dynamical boundary.}, Comm. Part. Diff.
Eq., {\bf 18}, (1993), pp. 1309--1364.

%\bibitem {E95} J. Escher, {\it  On the qualitative behavior of some semilinear parabolic problem.}, Diff. and
Integ. Eq., {\bf 8} (2), (1995), pp. 247--267.

\bibitem {SP80} E. S{\'a}nchez-Palencia,  {\it  Non-homogeneous media and vibration theory.} Vol. 127, 1980.

\bibitem{Ver}
A.Yu. Veretennikov, {\it On the averaging principle for systems of stochastic differential equations},
Mat. USSR Sb., {\bf 69} (1991), 271--284.

\bibitem {Yosida} K. Yosida,  {\it  Functional Analysis. Reprint of the sixth (1980) edition. Classics in Mathematics.}, Springer-Verlag, Berlin 11 (1995): 14.

%\bibitem {H89} T. Hintermann, {\it  Evolution equations with dynamic boundary conditions.}, Proc. Roy.
%Soc. Edinburgh Sect. A, {\bf 113}, (1989), pp. 43--60.

%\bibitem{T04} C. Timofte, {\it Homogenization results for parabolic problems with dynamical boundary
%conditions.}, Romanian Rep. Phys., {\bf 56}, (2004), pp. 131--140.



\end{thebibliography}
\end{document}